\documentclass[11pt,oneside,a4paper]{article}

\usepackage[top=20mm, right=20mm, bottom=20mm, left=20mm]{geometry}
\usepackage{amsfonts, amsbsy, amsmath, amsthm, amssymb, latexsym}
\usepackage{mathrsfs}
\usepackage{color}
\usepackage{lscape}

\newtheorem{theorem}{Theorem}[section]
\newtheorem{lemma}[theorem]{Lemma}
\newtheorem{propn}[theorem]{Proposition}
\newtheorem{corol}[theorem]{Corollary}

\theoremstyle{definition}

\newtheorem{definition}[theorem]{Definition}

\newtheorem*{lemma*}{Lemma}

\newcommand{\SM}{\setminus\{1\}}
\newcommand{\C}{\mathscr{C}}
\newcommand{\Chat}{\widehat{\mathscr{C}}}

\title{ On Centres of $3$-Blocks of the Ree groups $^2G_2(q)$}
\author{Julian Brough$^1$ and Inga Schwabrow$^2$}

\begin{document}
\date{}
\maketitle

\vspace{-10mm}
\begin{center}

\small
\textit{$^1$ FB Mathematik, TU Kaiserslautern, Postfach 3049, 67653 Kaiserslautern, Germany}

\text{E-mail: brough@mathematik.uni-kl.de}

\textit{$^2$ School of Mathematics, The University of Manchester, Manchester, M13 9PL, UK}

\text{E-mail: inga.schwabrow@gmx.de}
\end{center}

\normalsize

\begin{abstract}
Let $G:={^2G_2}(q)$ be the simple Ree group with $q=3^{2k+1}$ and $k$ a positive integer. We show that the centre of the principal block $Z(kGe_0)$, where $k$ is an algebraically closed field of characteristic $3$, is not isomorphic to the centre of the Brauer corresponding block $Z(kN_G(P))$, where $N_G(P)$ is the normaliser in $G$ of a Sylow $3$-subgroup.
As part of the proof, we compute the conjugacy classes of elements and the character tables of the maximal parabolic subgroups of $G$.
\end{abstract}

\section{Introduction}

Brou{\'e}'s conjecture postulates that if a block has abelian defect groups, then there exists a perfect isometry between the block and its Brauer correspondent. 
If such a perfect isometry exists, it follows that the centres of the blocks are isomorphic over a sufficiently large complete discrete evaluation ring $\mathcal{O}$. 
In particular,  $Z(\mathcal{O}G\hat{e_0})\cong Z(\mathcal{O}N_G(P)\hat{f_0})$, for $\hat{e_0}$ and $\hat{f_0}$ the corresponding principle block idempotents.

Originally, Brou{\'e} also made this conjecture for the principal blocks, in the case that the Sylow normaliser controls the $p$-fusion \cite{Broue88}. An initial counterexample to this conjecture was given by
 Brou{\'e} and Serre \cite[Section 6]{Broue1}. In particular, they considered the Cartan matrices to conclude that there is no derived equivalence in the case of $^2B_2(8)$ with $p=2$. 

Later on, Cliff elaborated on this counterexample by considering the radical structure of the centres of the blocks \cite{Cliff}.
Let $G$ be a finite simple group of Lie type $^2B_2(q)$ and $P$ a Sylow $2$-subgroup $G$, where $q=2^{2k+1}\geq 8$. 
Cliff observed that over a field $k$ of characteristic $2$, $Z(kGe_0)\cong Z(kN_G(P))$, while over a discrete valuation ring $\mathcal{O}$ of characteristic zero $Z(\mathcal{O}G\hat{e_0})\not\cong Z(\mathcal{O}N_G(P))$ \cite[Theorem 4.1]{Cliff}.
This provided an infinite family of pairs of blocks whose centres are isomorphic over a field $k$ of characteristic $p$, but are not perfectly isometric over a discrete valuation ring of characteristic $0$.

In this paper we extend the ideas of Cliff to the simple groups of Lie type $^2G_2(q)$, where $q=3^{2k+1}$, and show that in this case there does not exist an isomorphism of centres over a field $k$ of characteristic $3$, which implies there does not exist one over $\mathcal{O}$.
To do this we analyse the radical structure of the centres of the principle blocks of $^2G_2(q)$ and $N_G(P)$ for $P$ a Sylow $3$-subgroup. 
Note that the group algebra $kN_G(P)$ is indecomposable.
Although $Z(kGe_0)$ and  $Z(kN_G(P))$ have the same dimension over $k$, we will show in Theorem \ref{th:MainReeLL3} that $LL(Z(kGe_0))=3$, whereas Theorem \ref{th:Snowman} states that $LL(Z(kN_G(P)))=2$; hence no such isomorphism can exist.

For a Sylow $3$-subgroup $P$ of $^2G_2(q)$, the normaliser is isomorphic to $P\rtimes C_{q-1}$ \cite{Ward}.
Therefore $N_G(P)$ is a solvable group; furthermore $O_{3'}(N_G(P))=1$ and so $N_G(P)$ has a unique $3$-block $b$ \cite[Proposition III.1.12]{KarpilovskyJacRadical}.
This implies that the principle block of $^2G_2(q)$, $B_0$, is the unique $3$-block of maximal defect.
Moreover as $^2G_2(q)$ has trivial intersection Sylow $3$-subgroups \cite[p. 307]{CFSG}, its blocks either have maximal defect or defect zero.
However $^2G_2(q)$ only has one character of $3$-defect zero (the Steinberg character), thus the group $^2G_2(q)$ has two  blocks.
The group $^2G_2(q)$ has $q+8$ irreducible characters \cite[p. 85]{Ward}, from which it now follows by \cite[Proposition 6.2]{Blau} that $k(B_0)=q+7=k(b)$, where $k(B)$ is the number of irreducible complex characters in a block $B$. 

In order to study the Loewy length of the center of these group algebras, we make use of the character tables.
The (complex) irreducible characters of $^2G_2(q)$ were computed by Ward \cite{Ward}. 
For the normaliser of a Sylow $3$-subgroup, we use the character degrees obtained by Eaton \cite{Eaton1} and a partial segment of the character table computed by Gramain \cite{GramainPhD} to produce the full character table,  Table~\ref{tb:CharacterTabReeNorm}.
Note that a complete character table for $N_G(P)$ was given by Landrock and Michler \cite[p. 88]{LandrockMichler}. 
However as only vague details  were provided, we give an independent and much more detailed construction of the character table here.

\vspace{4mm}
\noindent
We summarise the main result obtained in this paper.
\begin{theorem}\label{th:FullStatmentRee} Let $k$ be an algebraically closed field of characteristic $3$ and $G= {^2G_2}(q)$, $q=3^{2k+1}\geq 27$, $P\in {\rm Syl}_3(G)$. Then  $LL(Z(kGe_0))=3> 2 =LL(Z(kN_G(P)))$; hence
\[Z(kGe_0)\not\cong Z(kN_G(P)).\]
\end{theorem}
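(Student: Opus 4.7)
The proof strategy decomposes the statement into two separate Loewy length computations, as already indicated in the introduction: Theorem \ref{th:MainReeLL3} establishes that $LL(Z(kGe_0)) = 3$ while Theorem \ref{th:Snowman} shows $LL(Z(kN_G(P))) = 2$. Since the Loewy length is an isomorphism invariant of a finite-dimensional $k$-algebra, the two centres cannot be isomorphic; the equality $k(B_0) = q+7 = k(b)$ recorded in the introduction forces these algebras to have equal dimension, so no cruder invariant can separate them.

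For the Sylow normaliser side, $N_G(P)$ has a unique $3$-block, so $Z(kN_G(P))$ is the entire centre of the group algebra. The plan is to use the class sums $\hat{C}$ as a basis and compute their products via structure constants extracted from the character table in Table \ref{tb:CharacterTabReeNorm}, reducing coefficients modulo $3$. One then identifies the Jacobson radical as the kernel of the projection onto the semisimple quotient and verifies directly that the product of any two elements of the radical vanishes, giving $LL \leq 2$; the matching lower bound $LL \geq 2$ is automatic since $N_G(P)$ is a non-semisimple modular group algebra. Because $N_G(P) = P \rtimes C_{q-1}$ is relatively small and solvable, the matrix of structure constants is tractable once the character table is in hand.

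For the principal block of $G$, I would proceed analogously using Ward's character table, but the computation now occurs inside $Z(kGe_0)$, so all class sums must first be multiplied by the block idempotent $e_0$. To prove the lower bound $LL \geq 3$, it suffices to exhibit two elements $z_1,z_2 \in J(Z(kGe_0))$ whose product $z_1 z_2$ is nonzero: natural candidates are differences of the form $\hat{C}_i e_0 - \lambda_i e_0$ with $\lambda_i$ chosen so that every central character of $B_0$ vanishes on $z_i$. For the upper bound $LL \leq 3$, one verifies that every triple product of such radical elements vanishes modulo $3$; here the trivial intersection property of the Sylow $3$-subgroups severely restricts which class products contribute, since products of elements from $p$-singular classes either land in $p$-singular classes of the same Sylow or return to the identity in a controlled way.

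The principal technical obstacle is twofold. First, reconstructing the character table of $N_G(P)$ in enough detail to compute structure constants requires a substantial excursion through the character theory of the maximal parabolic subgroups of $G$, which is the auxiliary work the abstract advertises. Second, once both character tables are available, the class multiplication coefficients for $G$ must be computed from Ward's data, reduced modulo $3$, and then intertwined with the idempotent $e_0$, which is notation-heavy. The conceptual point underlying the argument is that the extra layer of the Loewy filtration on the $G$-side, absent on the normaliser side, reflects the parabolic structure of ${}^2G_2(q)$ which $N_G(P)$ does not see; pinpointing the two specific radical elements whose product witnesses $LL \geq 3$ is therefore the crux of the argument.
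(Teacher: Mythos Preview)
Your proposal is correct and follows essentially the same approach as the paper: reduce to the two Loewy length computations of Theorems~\ref{th:MainReeLL3} and~\ref{th:Snowman}, identify a spanning set for the Jacobson radical via class sums (the paper does this by observing which class sizes are divisible by~$3$ rather than via central characters, but the outcome is the same), and compute class-sum products modulo~$3$ using Burnside's formula and the two character tables. The concrete witness the paper uses for $LL(Z(kGe_0))\geq 3$ is $(1+\Chat(X))^2 e_0$ (or equivalently $\Chat(T)^2 e_0$), and the trivial intersection property enters precisely as Proposition~\ref{prop:a(xyz)generic}, which lets certain structure constants for $G$ be read off from those already computed in $N_G(P)$.
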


As an immediate corollary, we get the following result.
\begin{corol} There is no perfect isometry, and hence no derived equivalence, between $kGe_0$ and $kN_G(P)$.
\end{corol}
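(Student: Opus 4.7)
My plan is to deduce the corollary formally from Theorem~\ref{th:FullStatmentRee} using standard reduction results in block theory. The approach is by contrapositive: if either a derived equivalence or a perfect isometry existed between $kGe_0$ and $kN_G(P)$, then the two $k$-algebras $Z(kGe_0)$ and $Z(kN_G(P))$ would be isomorphic, contradicting Theorem~\ref{th:FullStatmentRee}.

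First I would appeal to Rickard's theorem that a derived equivalence between block algebras of finite groups induces a perfect isometry, obtained as the character-theoretic shadow of a tilting complex. This reduces the task to ruling out a perfect isometry between $kGe_0$ and $kN_G(P)$. For that step I would invoke the result of Brou\'e, already referenced in the introduction, that a perfect isometry between blocks lifts to an $\mathcal{O}$-algebra isomorphism $Z(\mathcal{O}G\hat{e_0}) \cong Z(\mathcal{O}N_G(P)\hat{f_0})$ of the centres of the corresponding blocks over a sufficiently large complete discrete valuation ring $\mathcal{O}$ with residue field $k$. Reducing modulo the maximal ideal of $\mathcal{O}$, and using the standard fact that $Z(\mathcal{O}H)$ is $\mathcal{O}$-free on class sums with reduction $Z(\mathcal{O}H)\otimes_\mathcal{O} k \cong Z(kH)$ compatibly with the block decomposition, one obtains an induced isomorphism $Z(kGe_0) \cong Z(kN_G(P))$ of $k$-algebras.

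This final isomorphism would directly contradict Theorem~\ref{th:FullStatmentRee}, since the Loewy lengths of the two centres are distinct ($3$ versus $2$), and in particular Loewy length is an invariant of the $k$-algebra structure. Hence no perfect isometry, and a fortiori no derived equivalence, can exist. There is no genuine obstacle here: both reduction steps are classical in modular representation theory, and the substantive content of the argument lies entirely in the Loewy-length computations carried out in the proof of Theorem~\ref{th:FullStatmentRee}.
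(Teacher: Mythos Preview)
Your proposal is correct and matches the paper's own treatment: the corollary is stated there as an immediate consequence of Theorem~\ref{th:FullStatmentRee}, relying on exactly the standard reductions you describe (perfect isometry forces an isomorphism of centres over $\mathcal{O}$, hence over $k$; and a derived equivalence yields a perfect isometry, or alternatively directly an isomorphism of centres). The paper gives no further details, so your fleshing-out via Rickard and Brou\'e is precisely what is implicit in the text.
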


\section{Preliminary}

Let $(K,\mathcal{O},k)$ be a $p$-modular system; that is,  $K$ is a field of characteristic zero, $\mathcal{O}$ is a complete valuation ring with unique maximal ideal $ J(\mathcal{O})$, and $k$ is a field of characteristic $p$. In addition $\mathcal{O}/J(\mathcal{O})  \cong k$ and $K$ is the field of fractions of $\mathcal{O}$.

\subsection{The class algebra constants}

The conjugacy class sums form a basis of $Z(kG)$, and therefore the product of any conjugacy class sums must be a sum of conjugacy class sums. 
Let $\C(x)$ denote the conjugacy class of $x$ and 
\[
\Chat(x):=\sum\limits_{g\in \C(x)}g
\]
the class sum of $\C(x)$ which is an element in $\mathcal{O}G$.
Then the following common notation is adopted:
\begin{equation}\label{eq:Dog} \Chat(x)\Chat(y)=\sum_{z\in \mathscr{P}}a(x,y,z)\Chat(z) \;\;\;\;\;\text{ for } x,y\in G,\end{equation}
where $\mathscr{P}$ is a set of representatives of the conjugacy classes of $G$ and the constants $a(x,y,z)$ are referred to as the class algebra constants.
We note that from the definition, it follows that the structure constants $a(x,y,z)$ lie in $\mathbb{Z}$.

Burnside's original work in representation theory over $\mathbb{C}$ provided a method for obtaining the class algebra constants from the character table of the group. In particular, this connection is made precise by Burnside's formula \cite[p.316]{Burnside} which forms a crucial part in the study of representation theory and will play a large role in the calculations to follow in this paper.

Given $x,y,z\in G$, Burnside's formula states:
\begin{equation}\label{eq:StructureConstants}
a(x,y,z)=\frac{|G|}{|C_G(x)||C_G(y)|}\sum_{\theta\in {\rm Irr}(G)} \frac{\theta(x)\theta(y)\theta(z^{-1})}{\theta(1)}.
\end{equation}
From this formula it is clear that $a(x,y,z)=a(y,x,z)$.
Additionally we make the following observation, which reduces the number of explicit calculations required to compute all the structure constants.

\begin{lemma}\label{lm:inverseclass} Given $x,y,z \in G$, then $ a(x,y,z)=  a(x^{-1}, y^{-1}, z^{-1})$.
Furthermore 
\[
a(x,y,z)=\frac{|C_G(z)|}{|C_G(y)|}a(z^{-1},y,x^{-1})=\frac{|C_G(z)|}{|C_G(y)|}a(x^{-1},z,y).
\]
\begin{proof}
As $xy=z$ if and only if $y^{-1}x^{-1}=z^{-1}$, it follows that $a(x,y,z)=a(y^{-1},x^{-1},z^{-1})=a(x^{-1},y^{-1},z^{-1})$.
We note that the second statement follows by swapping either $\theta(x)$ with $\theta(z)$ or $\theta(y)$ with $\theta(z)$ and taking the complex conjugate in Burnside's formula.
\end{proof}
\end{lemma}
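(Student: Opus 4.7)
My plan is to derive all three equalities directly from Burnside's formula \eqref{eq:StructureConstants}, using only the elementary facts that $\theta(g^{-1})=\overline{\theta(g)}$, that $|C_G(g^{-1})|=|C_G(g)|$, and that the character sum appearing in Burnside's formula is real (and hence invariant under complex conjugation); the last point follows because the structure constants lie in $\mathbb{Z}$ while the prefactor is a positive rational.

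For the first assertion I would prefer a direct combinatorial argument. Expanding $\Chat(x)\Chat(y)$ and reading off the coefficient of a fixed representative of $\C(z)$ shows that $a(x,y,z)$ equals the number of pairs $(g,h)\in\C(x)\times\C(y)$ with $gh=z$. Inversion gives a bijection $(g,h)\mapsto(h^{-1},g^{-1})$ onto the set of pairs in $\C(y^{-1})\times\C(x^{-1})$ whose product is $z^{-1}$, so $a(x,y,z)=a(y^{-1},x^{-1},z^{-1})$. Combined with the symmetry $a(u,v,w)=a(v,u,w)$, which is immediate from Burnside's formula, this yields $a(x,y,z)=a(x^{-1},y^{-1},z^{-1})$.

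For the two remaining identities the key observation is that the product $\theta(x)\theta(y)\theta(z^{-1})$ inside the character sum is symmetric in its three factors. For the triple $(z^{-1},y,x^{-1})$, the associated Burnside sum is $\sum_\theta \theta(z^{-1})\theta(y)\theta(x)/\theta(1)$, which literally equals the sum for $(x,y,z)$; only the prefactor changes, and the ratio of the two prefactors produces the claimed quotient of centraliser orders. For the triple $(x^{-1},z,y)$ the same argument works after first applying complex conjugation to turn $\theta(x^{-1})\theta(z)\theta(y^{-1})$ into $\theta(x)\theta(z^{-1})\theta(y)$, a step justified precisely by the realness of the sum. I anticipate no real obstacle; the only point requiring care is to track correctly which two centraliser orders appear in each prefactor when the ratios are compared.
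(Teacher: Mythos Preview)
Your proof is essentially the same as the paper's: the first identity via the bijection $(g,h)\mapsto(h^{-1},g^{-1})$ together with $a(u,v,w)=a(v,u,w)$, and the remaining identities by permuting the three character values in Burnside's sum and then complex-conjugating, exactly as the paper sketches. Your caution about tracking the centraliser orders is well placed: carrying the computation through for $a(z^{-1},y,x^{-1})$ actually yields the ratio $|C_G(z)|/|C_G(x)|$ rather than the printed $|C_G(z)|/|C_G(y)|$, so the first displayed equality in the ``Furthermore'' appears to contain a typo (the second equality, with $a(x^{-1},z,y)$, is correct as stated and your conjugation argument proves it).
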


As we are working over a field of  positive characteristic, we are interested when the field characteristic divides the structure constant $a(x,y,z)$.
Using the notion of the defect of a conjugacy class provides one method to determine this in certain cases. 

\begin{definition}
Let $g$ be an element of a finite group $G$.
Then the $p$-defect of the conjugacy class of $g$ in $G$ is given by $d_g$, where $p^{d_g}$ is the order of a Sylow $p$-subgroup of $C_G(g)$.
\end{definition}

The following corollary follows from Lemma~\ref{lm:inverseclass}.

\begin{corol}\label{th:defectConClass} \cite[Cor 87.7]{CurtisReiner} Let $k$ be a field of characteristic $p$, and for $g\in G$, let $d_g$ be the defect of the conjugacy class $\C(g)$. If $d_y< d_z$ or $d_x<d_z$ then $p\;|\; a(x,y,z)$. Therefore
\[\Chat(x)\Chat(y)=\sum_{d_z\leq {\rm min}\{d_x, d_y\}}a(x,y,z)\Chat(z) \;\;\in kG.\]
\end{corol}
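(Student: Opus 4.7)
The plan is to deduce both statements directly from Burnside's formula and the identities in Lemma~\ref{lm:inverseclass}, by comparing $p$-adic valuations of integers. First I would record the trivial observation that $d_g = v_p(|C_G(g)|)$: this is immediate from the definition, since $p^{d_g}$ is the order of a Sylow $p$-subgroup of $C_G(g)$, and the index $[C_G(g):\mathrm{Syl}_p C_G(g)]$ is coprime to $p$.

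Next, I would rewrite the first identity of Lemma~\ref{lm:inverseclass} in the symmetric form
\[
|C_G(y)|\,a(x,y,z) \;=\; |C_G(z)|\,a(z^{-1},y,x^{-1}),
\]
and, applying the same identity with the roles of $x$ and $y$ swapped (using $a(x,y,z) = a(y,x,z)$), the companion
\[
|C_G(x)|\,a(x,y,z) \;=\; |C_G(z)|\,a(z^{-1},x,y^{-1}).
\]
All structure constants involved are integers, so I may compare $p$-adic valuations on each side. Suppose $d_z > d_y$. Then in the first equation the right-hand side has $p$-valuation at least $d_z$, while the factor $|C_G(y)|$ on the left contributes only $d_y < d_z$; hence $v_p(a(x,y,z)) \geq d_z - d_y > 0$, which yields $p \mid a(x,y,z)$. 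The case $d_z > d_x$ is handled identically using the second equation.

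For the displayed consequence, if $d_z > \min\{d_x,d_y\}$ then one of the two hypotheses just treated must hold, so $a(x,y,z) \equiv 0 \pmod{p}$ and its image vanishes in $k$. Reducing \eqref{eq:Dog} modulo $J(\mathcal{O})$ therefore leaves only the terms indexed by classes with $d_z \leq \min\{d_x,d_y\}$, giving the stated identity in $kG$. I do not anticipate any real obstacle: the entire argument is a single $p$-adic valuation comparison, the only subtlety being that Lemma~\ref{lm:inverseclass} supplies \emph{two} relations, so that either $|C_G(x)|$ or $|C_G(y)|$ may serve as the comparison factor, which is precisely what is needed for the $\min$ on the right-hand side.
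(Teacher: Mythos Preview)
Your argument is correct and is precisely the approach the paper intends: the paper simply states that the corollary follows from Lemma~\ref{lm:inverseclass}, and you have filled in exactly the $p$-adic valuation comparison that makes this implication explicit.
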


\begin{propn}\label{th:sumStrucConst} Fix $y,z \in G$. Then
\[ \sum_{x\in \mathscr{P}} a(x,y,z)= |\C(y)|.\]
\end{propn}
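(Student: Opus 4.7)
My plan is to exploit the \emph{absorbing} property of the full group sum $S := \sum_{g \in G} g \in \mathcal{O}G$. I would sum the defining relation (\ref{eq:Dog}) over $x$ ranging through the class representatives $\mathscr{P}$; since $\sum_{x \in \mathscr{P}} \Chat(x) = \sum_{g \in G} g = S$, this collapses to
\[
S \cdot \Chat(y) \;=\; \sum_{z \in \mathscr{P}} \Bigl( \sum_{x \in \mathscr{P}} a(x, y, z) \Bigr) \Chat(z).
\]

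Next I would invoke the elementary observation that $S \cdot h = S$ for every $h \in G$, since right multiplication by $h$ merely permutes the elements of $G$. Applying this to each summand of $\Chat(y) = \sum_{h \in \C(y)} h$ gives
\[
S \cdot \Chat(y) \;=\; \sum_{h \in \C(y)} S \cdot h \;=\; |\C(y)| \cdot S \;=\; |\C(y)| \sum_{z \in \mathscr{P}} \Chat(z).
\]
Comparing coefficients in the $\mathbb{Z}$-basis of class sums of $Z(\mathcal{O}G)$ then produces the claimed identity for each $z \in \mathscr{P}$, independently of $z$.

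There is no real obstacle: the argument is essentially a two-line manipulation inside $Z(\mathcal{O}G)$, and its only ingredients are the defining equation (\ref{eq:Dog}) together with the fact that $S$ is annihilated (up to scalar) by any group-ring multiplication. A less economical character-theoretic alternative would substitute Burnside's formula (\ref{eq:StructureConstants}) into $\sum_x a(x,y,z)$ and then use the orthogonality relation $\sum_{x \in \mathscr{P}} |\C(x)|\,\theta(x) = \sum_{g \in G} \theta(g) = |G|\,\delta_{\theta,1_G}$ to kill every non-trivial character; the trivial character contributes $|G|/|C_G(y)| = |\C(y)|$, recovering the same answer. I would favour the direct argument above, since it is shorter and makes transparent why the answer depends only on $y$.
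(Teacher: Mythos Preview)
Your proposal is correct and is essentially the same argument as the paper's: both sum the defining relation \eqref{eq:Dog} over $x\in\mathscr{P}$, collapse $\sum_{x}\Chat(x)$ to the full group sum, use that right multiplication by any $h\in G$ fixes this sum, and then compare coefficients of $\Chat(z)$. The only difference is cosmetic (you name $S$ explicitly and mention the character-theoretic alternative).
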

\begin{proof}
We have
\[ \sum_{x\in \mathscr{P}} \Chat(x)\Chat(y)=\sum_{g\in G} g\Chat(y)=\sum_{y' \in \C(y)} \hat{G} y'= |\C(y)|\;\sum_{g\in G} g=|\C(y)|\;\sum_{z\in \mathscr{P}} \Chat(z). \]
On the other hand
\[ \sum_{x\in \mathscr{P}} \Chat(x)\Chat(y)=\sum_{x\in \mathscr{P}}\sum_{z\in \mathscr{P}} a(x,y,z)\Chat(z) . \]
Therefore the coefficient of $\Chat(z)$ is $\sum_{x\in \mathscr{P}} a(x,y,z)$ and the proposition follows.
\end{proof}

We end this section with a result which relates the structure constants of a group $G$, with trivial intersection Sylow $p$-subgroups, to the normaliser of $Q$ a Sylow $p$-subgroup.
In particular, we generalise the result of Cliff for the Suzuki groups \cite[Lemma 3.2]{Cliff} by observing that the argument holds whenever the Sylow $p$-subgroups have the trivial intersection property.
Note that for such a group $G$, we have that $N_G(Q)$ controls fusion of $Q$ in $G$; that is, if $x^g\in Q$ for $x\in Q$ and $g\in G$, then either $x=1$ or $g\in N_G(Q)$. 
This follows from the observation that $x^g\in Q\cap Q^g$ which equals $Q$ or is trivial.

\begin{propn}\label{prop:a(xyz)generic} Let $G$ be a finite group with trivial intersection Sylow $p$-subgroups.
Let $x,y,z\in \; Q\setminus\{1_G\}$ where $Q\in  {\rm Syl}_p(G)$. Then $a(x,y,z)\equiv a_H(x,y,z)\;\;{\rm mod} \;|C_G(z)|_p$ where $H= N_G(Q)$.
\end{propn}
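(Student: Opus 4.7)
The plan is to interpret both sides as counts of ordered factorisations of $z$, and then to exhibit a free action of a $p$-subgroup of order $|C_G(z)|_p$ on the set of $G$-factorisations that do not already appear in $H$.

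First, directly from \eqref{eq:Dog}, $a(x,y,z)$ equals
\[
|S|:=\bigl|\{(x',y')\in \C_G(x)\times\C_G(y):x'y'=z\}\bigr|,
\]
and $a_H(x,y,z)=|S_H|$ with $S_H$ defined analogously using $H$-classes. Since the Sylow $p$-subgroups of $G$ are TI, the remark preceding the statement yields that $H=N_G(Q)$ controls $G$-fusion in $Q\setminus\{1\}$, whence $\C_G(w)\cap Q=\C_H(w)$ for every $w\in Q\setminus\{1\}$. Writing $S_Q\subseteq S$ for the pairs with $x',y'\in Q$, this gives $S_Q=S_H$, so $|S_Q|=a_H(x,y,z)$. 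It therefore suffices to show that $|S\setminus S_Q|$ is divisible by $|C_G(z)|_p$.

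Next, I would pin down $|C_G(z)|_p$ as the order of a concrete subgroup of $Q$: set $P_z:=C_G(z)\cap Q=C_Q(z)$. Any Sylow $p$-subgroup $R$ of $C_G(z)$ contains the non-trivial $p$-element $z$, and by TI the unique Sylow $p$-subgroup of $G$ containing $z$ is $Q$; hence $R\subseteq Q\cap C_G(z)=P_z$, forcing $R=P_z$. Thus $P_z$ is the unique Sylow $p$-subgroup of $C_G(z)$, and $|P_z|=|C_G(z)|_p$.

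The core step will be the claim that $P_z$ acts freely on $S\setminus S_Q$ by simultaneous conjugation $(x',y')\mapsto(c^{-1}x'c,\,c^{-1}y'c)$; this is well-defined because $c\in C_G(z)$, and it visibly preserves $S_Q$. Suppose $c\in P_z\setminus\{1\}$ fixes some $(x',y')\in S$. Then $c$ commutes with the non-trivial $p$-element $x'$, so $\langle c,x'\rangle$ is a $p$-subgroup of $G$; by TI the Sylow $p$-subgroup containing it must be the unique Sylow containing $c\in Q$, namely $Q$ itself, so $x'\in Q$. The same argument gives $y'\in Q$, hence $(x',y')\in S_Q$. Consequently every $P_z$-orbit on $S\setminus S_Q$ has cardinality $|P_z|=|C_G(z)|_p$, and
\[
a(x,y,z)=|S_Q|+|S\setminus S_Q|\equiv a_H(x,y,z)\pmod{|C_G(z)|_p}.
\]
The main obstacle is precisely this freeness argument, where TI is invoked twice — once to pin down $R=P_z$ and once to push $x'$ and $y'$ into $Q$; the rest is structural bookkeeping.
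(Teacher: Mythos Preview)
Your proof is correct and follows the same overall architecture as the paper's: both interpret $a(x,y,z)$ as a count of factorisation pairs, split off the pairs lying in $Q$ (identified with the $H$-count via control of fusion), and then show that $|C_G(z)|_p$ divides the remainder via a conjugation action. The difference lies in that final step. The paper lets the full centraliser $C_G(z)$ act on $\mathcal{A}_2$ and argues that each stabiliser $C_G(z)\cap C_G(x')\cap C_G(y')$ has $p'$-order, by embedding it in $N_G(Q)\cap N_G(Q^g)$ for a suitable $g\notin N_G(Q)$ and showing the latter has trivial Sylow $p$-subgroup. You instead first pin down $P_z=C_Q(z)$ as the unique Sylow $p$-subgroup of $C_G(z)$, then let only $P_z$ act and prove freeness directly: if $c\ne 1$ commutes with $x'$ then $\langle c,x'\rangle$ is a $p$-group, and TI forces it into $Q$. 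Your route is a little cleaner, avoiding the stabiliser-index computation; the paper's route avoids having to identify $P_z$ explicitly. Both are short and both invoke TI at the same two junctures.
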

\begin{proof}
Let \[\mathcal{A}=\{ (x',y')\;|\;x'\in x^G, y'\in y^G, x'y'=z\}.\]  Then $|\mathcal{A}|=a(x,y,z)$ and $\mathcal{A}$ can be split into two disjoint sets $\mathcal{A}_1\cup \mathcal{A}_2$, where
\[\begin{array}{rcl}
\mathcal{A}_1&=& \{ (x',y')\;|\;x'\in x^G\cap Q,\; y'\in y^G, \;x'y'=z\}  \\
\mathcal{A}_2&=& \{ (x',y')\;|\;x'\in x^G\setminus Q,\; y'\in y^G,\; x'y'=z\} \\
\end{array}\]
Note that as $z=x'y'\in Q$, we have $x'\in x^G\cap Q$ if and only if  $y'\in y^G\cap Q$.

\vspace{5mm} \noindent
\textbf{Claim 1} $x^G\cap Q=x^{N_G(Q)}$

Suppose $x' \in x^G\cap Q$. Then there exists an element $g\in G$ such that $x^g=x' \in Q\cap x^G$. Since the Sylow $p$-subgroups of $G$ are trivial intersection, by the above remark, there exists $h\in N_G(Q)$ such that $x^g=x^h$. Hence $ x'=x^g=x^h\in x^{N_G(Q)}$.

Therefore $|\mathcal{A}_1|=a_H(x,y,z)$.

\vspace{4mm} \noindent
\textbf{Claim 2} The size of $\mathcal{A}_2$ is divisible by $|C_G(z)|_p$.

Take an element $z$ in $Q\SM$. Suppose $g\in  N_G(\langle z\rangle)$, so $ \langle z\rangle^g=\langle z\rangle$; this implies $\langle z\rangle^g \in Q\cap Q^g=\{1\}$, a contradiction unless $g\in N_G(Q)$. Hence the normaliser and thus the centraliser of $z$ is a subgroup of $N_G(Q)$, i.e.  $C_G(z)\leq N_G(\langle z\rangle)\leq N_G(Q).$

\vspace{4mm}
Note that if $g\in C_G(z)$ then $z^g=z=(x'y')^g=x'^gy'^g$; since $C_G(z)\leq N_G(Q)$, if $(x')^g\in Q$ then $x'\in Q^g=Q$.  Therefore $C_G(z)$ acts on $\mathcal{A}_2$.

\vspace{4mm}
Suppose $(x',y')\in \mathcal{A}_2$. By the orbit stabiliser theorem,  the size of a $C_G(z)$-orbit containing $(x',y')$ is given by
\[\begin{array}{ccl}
| (x',y')^{C_G(z)}| &=& [C_G(z)\;:\;(C_{C_G(z)}(x')\cap C_{C_G(z)}(y'))]\\
&=& [C_G(z)\;:\;(C_G(z)\cap C_G(x')\cap C_G(z)\cap C_G(y'))]\\
\end{array}\]
Consider $C_G(z)\cap C_G(x')$, and suppose $x'=x^g$ such that $g\not\in N_G(Q)$; then $C_G(z)\cap C_G(x^g)\leq N_G(Q)\cap N_G(Q^g)$. Suppose $S\in {\rm Syl}_p(N_G(Q)\cap N_G(Q^g))$. As $S\leq N_G(Q)$ it follows that  $S\leq Q$; similarly  $S\leq N_G(Q^g)$ implies $S\leq Q^g$. Hence as $g\not\in N_G(Q)$,  $S\leq Q\cap Q^g= 1\SM$, and it follows that $p$ does not divide $|C_G(z)\cap C_G(x')|$. In particular, $p$ does not divide $|C_G(z)\cap C_G(x')\cap C_G(y')|$.

Hence $|(x',y')^{C_G(z)}|_p= |C_G(z)|_p$ for all pairs $(x',y')\in\mathcal{A}_2$ and therefore $|C_G(z)|_p$ divides the size of $\mathcal{A}_2$.

Finally, combining the two claims, $|\mathcal{A}|\equiv |\mathcal{A}_1|  \; {\rm mod}\; |C_G(z)|_p$, and  since $\mathcal{A}_1$ and $\mathcal{A}_2$ are disjoint, $|\mathcal{A}|\equiv |\mathcal{A}_1|\equiv a_H(x,y,z) \; {\rm mod}\;  |C_G(z)|_p$.
\end{proof}

\subsection{The character table of the Sylow normaliser}
In this section we shall construct the full complex character table for the normaliser of a Sylow $3$-subgroup $P$ of $^2G_2(q)$ with $q=3^{2k+1}$.
First we recall the notation used to describe both the Sylow $3$-subgroup and its normaliser.
The standard references for the description presented are \cite{SGLT} and \cite{GramainPhD}.

Let $P$ be a Sylow $3$-subgroup of $G$. 
Then $P$ is isomorphic to the set of elements $\{x(t,u,v)\;|\; t,u,v\in \mathbb{F}_q\}$ endowed with a multiplication given by
\[x(t_1,u_1,v_1)x(t_2,u_2,v_2)= x(t_1+t_2, u_1+u_2-t_1t_2^{3\theta}, v_1+v_2-t_2u_1+t_1t_2^{3\theta+1}-t_1^2t_2^{3\theta})\]
 where $\theta$ is the automorphism of $\mathbb{F}_q$ given by $\lambda^{\theta}=\lambda^{3^k}$ for all $\lambda\in \mathbb{F}_q$.
Observe that the inverse of an element $x(t,u,v)\in P$ is given by
\[x(t,u,v)^{-1}=x(-t,-u-t^{3\theta+1}, -v-tu+t^{3\theta+2}).\]

As $P$ is a normal Sylow $3$-subgroup of $N_G(P)$, there exists a complement $W$ to $P$ in $N_G(P)$.
In particular, $W$ is a cyclic group of order $q-1$ that can be labelled by the set $\{h(w)\;|\;w\in \mathbb{F}_q^{\times}\}$; furthermore the conjugation of $P$ by $W$ is given by
\[h(w)x(t,u,v)h(w)^{-1}=x(w^{2-3\theta}t,w^{3\theta-1}u,wv)\]
where $t,u,v,w \in \mathbb{F}_q, w\neq0$. 
Note that $h(1)=1_W=1_G$ and $h(-1)$ is the unique involution in $W$.

The elements of $P$ form $7$ conjugacy classes in $N_G(P)$ and the class sizes and element orders can be found in \cite[Section 2.5.2]{GramainPhD}.
Since there are $q+7$ conjugacy classes overall in $N_G(P)$, it remains to determine the remaining $q$ conjugacy classes.
The following details are summerised in Table~\ref{tb:ReeNormConCl}.

As $W$ is cyclic, to construct $C_N(h(w))$, it is enough to find $x\in C_P(h(w))$; moreover $\C(h(w))=A_wh(w)$ for some subset $A_w\subset P$.
It is clear that $x(t,u,v)= h(w)x(t,u,v)h(w)^{-1}= x(w^{2-3\theta}t,w^{3\theta-1}u,wv)$ implies $v=0$ and $w^{2-3\theta}=w^{3\theta-1}=1$. 
However $w^{2-3\theta}=1$ has a unique solution $w=1$ in $\mathbb{F}_q$; while $ w^{3\theta-1}=1$ only has two solutions $w=\pm 1$ \cite[p. 62]{GramainPhD}. 
Thus if $w\neq \pm1$, then $C_N(h(w))= W$ and therefore $|A_w|=|P|$ and $\C(h(-1))=Ph(-1)$.
If $w=-1$, then $C_P(h(-1))= \{x(0,u,0)\;|\; u\in \mathbb{F}_q\}$ and so $|C_N(h(-1))|= q(q-1)$. 
Additionally, as $x(t,u,v)h(-1) x(t,u,v)^{-1}=x(t, t^{3\theta+1}, v+tu)h(-1)$, it follows that the $q^2$ elements $x(t, t^{3\theta+1}, v+tu)h(-1)$ form the conjugacy class $\C(h(-1))$. 
We note that $x(0,-1,0)h(-1)$ $\not\in$ $\C(h(-1))$.

Next consider $x(0,-1,0)h(-1)\in N_G(P)$. 
If $x(t,u,v)\in C_P(x(0,-1,0)h(-1))$ then $x(t,u,v)=x(-t,-1+u+w^{3\theta-1},-v-t)$.
In particular, $t=0$ and so $v=0$, which further implies $w^{3\theta-1}=1$.
Hence $C_N(x(0,-1,0)h(-1))=\{x(0,u,0)h(w)\in N_G(P) \:|\; u\in \mathbb{F}_q, w=\pm 1\}$ which has size $2q$.
Note that $(x(0,-1,0)h(-1))^{-1}=h(-1)x(0,1,0)=x(0,1,0)h(-1)$.
However as
\[ 
x(t,u,v)h(w)x(0,-1,0)h(-1) h(w)^{-1} x(t,u,v)^{-1} = x(-t, -w^{3\theta-1}+t^{3\theta+1}, -vt^{3\theta+2}+tw^{3\theta-1})h(-1),
\]
it follows that $(x(0,-1,0)h(-1))^{-1}=x(0,1,0)h(-1)\not\in \C( x(0,-1,0)h(-1))$, otherwise  $t=0$ and $-w^{3\theta-1}=1$, which has no solution $w\in \mathbb{F}_q^{\times}$ as $4$ does not divide $q-1$.
Hence the two remaining conjugacy classes are an inverse pair.

\begin{table}[h]
\centering
\caption{Conjugacy classes of $N_G(P)$}\label{tb:ReeNormConCl}
\[\footnotesize
\begin{array}{c|c|c|c|c|c}
\text{label}  &\C(g)&o(g) & |C_N(g)| &  C_N(g)&|\C(g)|\\\hline
1_N&x(0,0,0)h(1)& 1& |N|&N_G(P)& 1\\\hline
X= Z(P) \setminus\{1_P\}& x(0,0,v)h(1)  &3& q^3&  P&q-1\\
&  v\neq 0&  & & & \\\hline
T, T^{-1}& x(0,u,v)h(1)   &3& 2q^2&   x(0,u,v)h(w)  & q(q-1)/2\\
&  u\neq 0& & &w=\pm1 & \\\hline
Y, YT, YT^{-1}& x(t,u,v)h(1) & 9& 3q& x(0,0,v_2),  & q^2(q-1)/3\\
&  t\neq 0& & & x(t,u,v_2),& \\
 &&&&x(-t,-t^{3\theta+1}-u,v_2) & \\
 &&&&   \text{ where } v_2\in \mathbb{F}_q &  \\\hline
Ph(w)& x(t,u,v)h(w) &|h(w)| & q-1&  x(0,0,0)h(w)  & q^3\\
w\neq \pm 1&  w \text{ fixed },w\neq \pm 1 & & & & \\\hline
J:x(0,0,0)h(-1)& x(t,t^{3\theta+1},v+tu)h(-1) & 2& q(q-1)&  x(0,u,0)h(w) & q^2\\\hline
JT& \alpha &6 & 2q&  x(0,u,0)h(w) & q^2(q-1)/2\\
x(0,-1,0)h(-1)& & & &  w=\pm1 & \\\hline
JT^{-1}& \beta &6 & 2q&  x(0,u,0)h(w) & q^2(q-1)/2\\
(x(0,-1,0)h(-1))^{-1}& & & &  w=\pm1 & \\\hline
\end{array}\normalsize
\]
where $t,u,v \in \mathbb{F}_q$ and $w\in \mathbb{F}_q^{\times}$,
\[
\alpha= x(-t, -w^{3\theta-1 }+t^{3\theta+1},-v-t^{3\theta+1}+tw^{3\theta-1})h(-1)
\] 
and 
\[
\beta=x(-t, w^{3\theta-1 }+t^{3\theta+1},-v-t^{3\theta+1}-tw^{3\theta-1})h(-1).
\]
\end{table}

\subsubsection{Detailed construction of the character table}
In this section we make use of the common notation $\overline{\theta}$ to denote the complex conjugate of the character $\theta$.
Furthermore given two characters $\theta_1$ and $\theta_2$ of $N_G(P)$, then $\langle \theta_1,\theta_2\rangle$ denotes the inner product of the two characters of $N_G(P)$:
\[
\langle \theta_1,\theta_2\rangle:=\frac{1}{N_G(P)}\sum\limits_{g\in N_G(P)}\theta_1(g)\overline{\theta_2(g)}.
\]

Gramain \cite[Section 2.5.4]{GramainPhD} gives the following characters of $N_G(P)$ which are induced from characters in $P$.
\[\begin{array}{ccc}
\text{ {\rm Irr}(P) }& {Ind_P^{N_G(P)}} & \text{ {\rm Irr}}(N_G(P)) \\
\{\lambda_2, \ldots, \lambda_q\}& \rightarrow&\lambda, \text{ degree } q-1\\
\{\psi_2, \ldots, \psi_q\}& \rightarrow&\psi, \text{ degree } (q-1)q\\
\{\chi_{i,j}, i=1,2 \text{ and } 1\leq j \leq \frac{q-1}{2}\}& \rightarrow&\chi, \text{ degree } (q-1)3^k\\
\{\overline{\chi_{i,j}}, i=1,2 \text{ and } 1\leq j \leq \frac{q-1}{2}\}& \rightarrow&\overline{\chi}, \text{ degree } (q-1)3^k\\
\{\chi_{3,j},   1\leq j \leq \frac{q-1}{2}\}& \rightarrow& \mu_1+\mu_2,  \text{ each $\mu_i$ irreducible of degree } \frac{q-1}{2}3^k\\
\{\overline{\chi_{3,j}},   1\leq j \leq \frac{q-1}{2}\}& \rightarrow&\overline{\mu_1}+\overline{\mu_2},  \text{ each $\overline{\mu}$ irreducible of degree } \frac{q-1}{2}3^k\\
\end{array}
\]

In addition to these irreducible characters we obtain $q-1$ linear characters $\alpha_0=1_N, \ldots, \alpha_{q-2}$ for $N_G(P)$ by lifting the characters $\widetilde{\alpha_i}$ with $0\leq i \leq q-2$ from the quotient group $N_G(P)/P\cong C_{q-1}$. 
In other words $\alpha_i(g):=\widetilde{\alpha_i}(gP)$, so $\alpha_i|_P=1$.
Let $W=\langle h\rangle$, and $\widetilde{\alpha_i}$ defined by $\widetilde{\alpha_i}(h^j)=\xi^{ij}$ for $\xi$ a fixed primitive $(q-1)^{th}$ root of unity. 
As the coset $J\cdot P$ is the unique involution in $W$ and $J\cdot P=JT^{\pm1}\cdot P$, it follows that $\alpha_i(JT^{\pm1})=\alpha_i(J)=\widetilde{\alpha_i}(h^{\frac{q-1}{2}})=(-1)^i$.
We now fix the elements $w_j$ such that $Ph(w_j)$ is mapped to $h^j$ in $W$.
For such a labeling we have $\alpha_i(Ph(w_j))=\xi^{ij}$.
Also note that $w_{\frac{q-1}{2}}=-1$ and $Ph(-1)$  labels the union of the conjugacy classes of $J,JT$ and $JT^{-1}$. 
From now on, when we write $Ph(w_j)$ we exclude the case that $w_j=\pm 1$ unless explicitly stated otherwise.

The following table combines the details contained in \cite{Eaton1} (character degrees) and \cite{GramainPhD} (character values) with the above values for the linear characters $\alpha_i$.

\[\footnotesize
\begin{array}{c||ccccccccccccccc}
|\C(g)|& 1&q-1&\frac{q(q-1)}{2}  &\frac{q^2(q-1)}{3}&\frac{q^2(q-1)}{3}& \frac{ q^2(q-1)}{3} &q^3&q^2& \frac{q^2(q-1)}{2}\\[7pt]
|C_N(g)|& q^3(q-1)&q^3&2q^2&3q&3q&3q&q-1&q(q-1)& 2q\\
\\\hline
& 1&X&T,T^{-1}&Y&YT& YT^{-1}&Ph(w_j)&J& JT, JT^{-1}\\\hline\hline
\alpha_0=1_N&1       & 1&1&1& 1&1&1&1&1    \\
\alpha_i&1      &1&1& 1& 1&1& \xi^{ij}& (-1)^i & (-1)^i  \\
\lambda&q-1       & && -1&-1&-1&&&    \\
\mu_1&\frac{3^k(q-1)}{2}       &   & & -\varepsilon3^k& -\varepsilon3^k \overline{\omega}& -\varepsilon 3^k\omega & &&  \\
\mu_2&\frac{3^k(q-1)}{2}       &    & & -\varepsilon3^k& -\varepsilon3^k \overline{\omega}& -\varepsilon 3^k\omega & &&   \\
\overline{\mu_1}&\frac{3^k(q-1)}{2}         &   & & -\varepsilon3^k& -\varepsilon 3^k\omega& -\varepsilon 3^k\overline{\omega}  & &&  \\
\overline{\mu_2}&\frac{3^k(q-1)}{2}         &   & & -\varepsilon3^k& -\varepsilon 3^k\omega& -\varepsilon 3^k\overline{\omega}  & &&   \\
\chi &3^k(q-1)          & &&  \varepsilon3^k&\varepsilon3^k \overline{\omega}& \varepsilon3^k\omega& &&    \\
\overline{\chi}&3^k(q-1)         & && \varepsilon3^k &\varepsilon3^k\omega& \varepsilon3^k\overline{\omega}& && \\
\psi  &q(q-1)         & & &0&0&0&&&      \\
\end{array}\normalsize\]
for some fixed $\varepsilon \in \{\pm1 \}$, and where $q=3^{2k+1}$, $\omega=e^{2i\pi/3}$, and $\xi$ is a fixed primitive $(q-1)^{th}$ root of unity.

\vspace{5mm}
\noindent
In order to fill in the remaining entries, we apply the orthogonality relations of a character table.
First we consider the characters $\psi,\lambda$ and $\chi$ which arise as induced characters.
Therefore $\psi(g)=\lambda(g)=\chi(g)=\bar{\chi}(g)=0$ for all $g\in N_G(P)\setminus P$.

\vspace{2mm}
\noindent
\underline{\bf The characters $\psi$, $\lambda$ and $\chi$}

\vspace{1mm}
As $\psi|_P=\sum_{i=2}^{q} \psi_i$ and $\psi_i(g)=0$ for all $g\in P\setminus Z(P)$ \cite[p.70]{GramainPhD}, it follows that $\psi(g)=0$ for all $g\in P\setminus Z(P)$.
Moreover
\[
\begin{array}{rcl}
\langle \psi,\alpha_0\rangle& =\frac{1}{|N_G(P)|}( q(q-1) + \psi(X)(q-1)) =0
\end{array}
\]
implies $\psi(X)=-q$.

Now that we have $\psi$, both $\chi(X)$ and $\lambda(X)$ can be computed:
\[
\langle 0=\theta,\psi\rangle= \left\{
                   \begin{array}{ll}
                     \frac{1}{|N_G(P)|}( q(q-1)(q-1) -q \lambda(X)(q-1)) & \theta=\lambda;\\
                     \frac{1}{|N_G(P)|}( q(q-1)3^k(q-1) -q(q-1)\chi(X)) & \theta=\chi.\\
                    \end{array}
                   \right.
\]
Thus $\lambda(X)=q-1$ and $\chi(X)=\overline{\chi}(X)=3^k(q-1)$.
Therefore it remains to compute $\lambda(T)$ and $\chi(T)$.
Since $\lambda$ is the unique character taking the value $-1$ on $Y$,  $\lambda=\overline{\lambda}$.
In other words $\lambda$ is real valued and so $\lambda(T^{-1})=\overline{\lambda(T)}=\lambda(T)$.
Hence,

\[
\langle \lambda,\alpha_0\rangle= \frac{1}{|N_G(P)|}\left( q-1 + (q-1)^2+2\lambda(T)\frac{q(q-1)}{2}- 3\frac{q^2(q-1)}{3}\right)=0
\]
and it follows that $\lambda(T)=\lambda(T^{-1})=q-1$.

To compute $\chi(T)$, we need to make use of two relations:
\[
\langle \chi,\alpha_0\rangle =\frac{1}{|N_G(P)|}\left( 3^k(q-1)(1+q-1)+\frac{q(q-1)}{2}(\chi(T)+\overline{\chi(T)})+ \frac{q^2(q-1)}{3} \cdot\varepsilon\cdot 3^k(1+\omega+\overline{\omega})\right)=0
\]
and
\[
\langle \chi,\chi\rangle=\frac{1}{|N_G(P)|}\left( (3^k(q-1))^2(1+q-1)+2\chi(T)\overline{\chi(T)}\frac{q(q-1)}{2}+\frac{q^2(q-1)}{3}\cdot(\varepsilon\cdot 3^k)^2(1+\omega\overline{\omega}+\overline{\omega}\omega)\right)=1.
\]

Hence $\chi(T)+\overline{\chi(T)}=-2\cdot 3^k$ and $\chi(T)\overline{\chi(T)}=q(q-2\cdot 3^{2k})+3^{2k}=3^{2k}(q+1)$, where the finally equality comes from substituting in $q=3^{2k+1}$.
Thus $\chi(T)$ and $\overline{\chi(T)}=\chi(T^{-1})$ are the zeros of the polynomial $x^2+(\chi(T)+\overline{\chi(T)})x+\chi(T)\overline{\chi(T)}$.
In particular, $\chi(T)=-3^k + 3^{2k} \sqrt{-3}$ and $\chi(T^{-1})=  -3^k- 3^{2k} \sqrt{-3}$.

\vspace{5mm}\noindent
So far we have calculated the following additional entries:
\[\footnotesize
\begin{array}{c||ccccccccccccccc}
|\C(g)|&q-1&\frac{q(q-1)}{2}&\frac{q(q-1)}{2}  &q^3&q^2& \frac{q^2(q-1)}{2}& \frac{q^2(q-1)}{2}\\[7pt]
|C_N(g)|&q^3&2q^2&2q^2&q-1&q(q-1)& 2q&2q\\
\\\hline
&X&T&T^{-1}&Ph(w)&J& JT& JT^{-1}\\\hline\hline
\lambda&q-1        &q-1 &q-1& 0&0&0  &0  \\
\chi       & 3^k(q-1)   &-3^k+ 3^{2k} \sqrt{-3} &-3^k- 3^{2k} \sqrt{-3}&  0 &0&0   &0 \\
\overline{\chi}&  3^k(q-1) &-3^k- 3^{2k} \sqrt{-3} &-3^k+ 3^{2k} \sqrt{-3}& 0 &0& 0&0 \\
\psi       &-q   & 0&0 &0&0&0   &0   \\
\end{array}\]\normalsize
where $q=3^{2k+1}$.

\vspace{2mm}
\noindent
\underline{\bf The characters $\mu_i$ and $\overline{\mu_i}$ for $i\in\{1,2\}$} 

As before, to determine $\mu_i(X)$, we take the inner product of $\mu_i$ and $\psi$:
\[
\langle \mu_i,\psi\rangle=\frac{1}{|N_G(P)|}\left( q(q-1)\frac{3^k(q-1)}{2} -q(q-1)\mu_i(X)\right) =0
\] 
and thus $\mu_i(X)=\frac{3^k(q-1)}{2}$.

Consider $\mu_i(Ph(w_j))$, for $w_j\ne\pm 1$.
Since $4$ does not divide $q-1$, $\alpha_k(Ph(w_j))$ takes at least 5 distinct values for $0\leq k\leq q-2$.
Therefore if $\mu_i (Ph(w_j))\ne 0$ then the set  $\{\alpha_k\mu_i \mid 0\leq k\leq q-2\}$ must contain 5 distinct irreducible characters of degree equal to that of $\mu_i$, which is a contradiction.
Hence $\mu_1(Ph(w_j))= \mu_2(Ph(w_j))=\overline{\mu_1}(Ph(w_j))=\overline{\mu_2}(Ph(w_j))=0$.

As $\C(J)=\C(J^{-1})$, it follows that $\mu_i(J)\in \mathbb{R}$.
Moreover $JT$ and $JT^{-1}$ are inverses and hence $\mu_i(JT^{-1})=\overline{\mu_i}(JT)$.
Furthermore $\mu_1+\mu_2=Ind_P^{N_G(P)}\theta$ which evaluates to zero on $J, JT$ and $JT^{-1}$, and so $\mu_2(g)=-\mu_1(g)$, for $g=J,JT$ or $JT^{-1}$.
We have the following part of the character table:
\[\begin{array}{c||rrr}
&  J&JT&JT^{-1}\\\hline
& & & \\[-5pt]
\alpha_i& (-1)^i&(-1)^i&(-1)^i\\
\lambda& 0&0&0\\
\mu_1& c& b & \overline{b}\\
\mu_2& -c& -b & -\overline{b}\\
\overline{\mu_1}& c& \overline{b}& b\\
\overline{\mu_2}& -c& -\overline{b}&- b\\
\chi,\overline{\chi},\psi&0&0&0\\
\end{array}
\]
 where $c\in\mathbb{R}$ and $b\in \mathbb{C}\setminus \mathbb{R}$.

By column orthogonality 
\[
\sum\limits_{\theta\in {\rm Irr}(N_G(P))} \theta(J)^2= q-1 + 4c^2= q(q-1),
\]
and thus $c=(q-1)/2$.

By using column orthogonality again 
\[
\sum\limits_{\theta\in {\rm Irr}(N_G(P))} \theta(JT)\overline{\theta(JT)}= q-1+4 (b\overline{b}) = 2q
\]
 and so $b\overline{b}= \frac{q+1}{4}$.
However using column orthogonality for $JT$ and $J$ yields
\[
\sum\limits_{\theta\in {\rm Irr}(N_G(P))} \theta(JT)\theta(J)= q-1+ b(q-1)+\overline{b}(q-1) = 0
\]
and so $b+\overline{b}= -1$.
Thus as with $\chi$ above, $b$ and $\overline{b}$ are the zeros of the polynomial $x^2+(b+\overline{b})x+m\overline{b}$ and so it follows that $b= \frac{-1-3^k\sqrt{-3}}{2}$, by relabeling $T$ and $T^{-1}$ if necessary.

It only remains to compute $\mu_i$ on $T$ and $T^{-1}$.
Similar to the calculations for $\chi$, 
\[
\langle \mu_1,\theta\rangle \Rightarrow \left\{
                   \begin{array}{ll}
                    \mu_1(T)+\overline{\mu_1(T)}= -3^k & \text{ when }\theta=\alpha_0\\
                     \mu_1(T)\overline{\mu_1(T)}= \frac{1}{4}(3^{4k+1}+3^{2k})& \text{ when } \theta=\mu_1\\
                    \end{array}
                   \right.
\]
and solving the quadratic polynomial as before yields $\mu_1(T)= \frac{3^k\pm 3^{2k} \sqrt{-3}}{2}$.

Taking column orthogonality for $T$ and $J$, we see that $\mu_1(T)+\overline\mu_1(T)=\mu_2(T)+\overline\mu_2(T)$.
Row orthogonality implies $\langle \mu_1,\mu_1\rangle=\langle \mu_2,\mu_2\rangle=1$.
However as $\mu_2=-\mu_1$ on $J,JT$ and $JT^{-1}$, it follows that $\mu_1(T)\overline\mu_1(T)=\mu_2(T)\overline\mu_2(T)$.
Hence $\mu_2(T)=\mu_1(T)$ or $\overline\mu_1(T)$.
Finally, by column orthogonality for $T$ and $JT$, we see that $(\mu_1(T)-\mu_2(T))\overline{b}=(\overline\mu_2(T)-\overline\mu_1(T))b$ and therefore it follows that if $\mu_2(T)=\overline\mu_1(T)$ then $\overline{b}=-b$, which is a contradiction.
Thus $\mu_1(T)=\mu_2(T)$ and $\mu_1(T)=\mu_2(T)=\frac{-3^k+3^{2k} \sqrt{-3}}{2}$.

\vspace{10mm}\noindent
Combining all the character values together, we have proven the following theorem.
\begin{theorem}\label{th:ReeNormFullCharTab} Let $G=$ $^2G_2(q)$ where $q=3^{2k+1}$,  and let $N_G(P)$ be the normaliser of a Sylow $3$-subgroup. Then the character table of $N_G(P)$ is given by Table \ref{tb:CharacterTabReeNorm}.
\end{theorem}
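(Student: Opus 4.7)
The plan is to assemble the full character table by combining the two sources of irreducible characters identified at the start of the subsection, and then pin down the unspecified entries via orthogonality. First I would list the $q-1$ linear characters $\alpha_i$ obtained by inflation from the cyclic quotient $N_G(P)/P\cong C_{q-1}$; these are forced by the description of the conjugacy classes, since every element in $Ph(w_j)$ maps to $h^j$ and the classes of $J, JT, JT^{\pm 1}$ all lie over the unique involution $h^{(q-1)/2}$. This already gives $q-1$ rows explicitly, leaving exactly $8$ non-linear irreducible characters to determine, matching the count $q+7$ from Section 1.

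Next I would organise the remaining eight characters according to Gramain's list of inductions $\lambda, \psi, \chi, \overline{\chi}, \mu_1, \mu_2, \overline{\mu_1}, \overline{\mu_2}$ from $\mathrm{Irr}(P)$. The key simplification is that characters induced from $P$ vanish on $N_G(P)\setminus P$ for $\lambda, \psi, \chi, \overline{\chi}$, while for the $\mu_i$ one must argue separately that they vanish on $Ph(w_j)$ whenever $w_j\neq\pm 1$ by the stabiliser count $\{\alpha_k\mu_i\}$ exceeding the multiplicity of the induction. Once these vanishing entries are recorded, the only unknowns are the values on the classes $X$, $T$, $T^{-1}$, $J$, $JT$, $JT^{-1}$.

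I would then fill these in using Burnside's orthogonality relations in the order carried out in the preceding pages. First $\langle \psi,\alpha_0\rangle=0$ gives $\psi(X)=-q$; inner products of $\psi$ with $\lambda$ and $\chi$ then force $\lambda(X)=q-1$ and $\chi(X)=3^k(q-1)$. The value $\lambda(T)=q-1$ follows from $\langle \lambda,\alpha_0\rangle=0$ after using that $\lambda$ is real. For $\chi(T)$ one sets up the simultaneous equations $\langle \chi,\alpha_0\rangle=0$ and $\langle\chi,\chi\rangle=1$, obtaining the sum and product of $\chi(T)$ and $\overline{\chi(T)}$, and solves the resulting quadratic. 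An analogous argument produces $\mu_i(X)=3^k(q-1)/2$ and the quadratic for $\mu_1(T)$. Finally, on the classes $J, JT, JT^{-1}$ I would exploit that $\mu_2=-\mu_1$ (since $\mu_1+\mu_2$ is induced from $P$ and therefore vanishes there) together with column orthogonality at $J$ and at $JT$ to determine $c$ and $b\overline{b}$, and column orthogonality comparing $J$ with $JT$ to get $b+\overline{b}$.

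The main obstacle is distinguishing the complex-conjugate pairs and fixing signs consistently: deciding $\mu_2(T)=\mu_1(T)$ rather than $\overline{\mu_1(T)}$, placing the $\sqrt{-3}$ on $T$ versus $T^{-1}$, and ensuring the single parameter $\varepsilon\in\{\pm 1\}$ inherited from \cite{GramainPhD} is propagated coherently. This is resolved by the delicate cross-column orthogonality $(\mu_1(T)-\mu_2(T))\overline{b}=(\overline{\mu_2(T)}-\overline{\mu_1(T)})b$, which rules out $\mu_2(T)=\overline{\mu_1(T)}$ via the contradiction $\overline{b}=-b$, and by relabeling $T\leftrightarrow T^{-1}$ to fix a consistent choice of square root. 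Once these sign choices are made, the entries computed in the preceding pages assemble into Table~\ref{tb:CharacterTabReeNorm} and the theorem follows.
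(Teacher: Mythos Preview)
Your proposal is correct and follows essentially the same approach as the paper: you assemble the linear characters by inflation, take Gramain's induced characters for the remaining eight rows, and then fill in the undetermined entries via exactly the sequence of orthogonality relations the paper uses, including the cross-column argument to force $\mu_2(T)=\mu_1(T)$ and the relabeling of $T\leftrightarrow T^{-1}$ to fix signs.
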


\begin{landscape}
\begin{table}[c]
\centering
\caption{Character table of $N_G(P)$}\label{tb:CharacterTabReeNorm}
\[
\begin{array}{c||ccccccccccccccc}
|C_N(g)|& q^3(q-1)&q^3&2q^2&2q^2&3q&3q&3q&q-1&q(q-1)& 2q&2q\\\hline
 & 1&X&T&T^{-1}&Y&YT& YT^{-1}&Ph(w_j)&J& JT& JT^{-1}\\\hline\hline
\alpha_0=1_N&1       & 1 & 1&1&1& 1&1&1&1&1  &1  \\
\alpha_i \text{ for } 1\leq i\leq q-2&1     & 1 &1&1& 1& 1&1&\xi^{ij} &(-1)^i& (-1)^i &(-1)^i \\
\lambda&q-1       &q-1 &q-1 &q-1& -1&-1&-1&0&0&0  &0  \\
\mu_1&\frac{3^k(q-1)}{2}       &\frac{3^k(q-1)}{2} & \frac{-3^k+ 3^{2k} \sqrt{-3}}{2} &\frac{-3^k- 3^{2k} \sqrt{-3}}{2} & -\varepsilon3^k& -\varepsilon3^k \overline{\omega}& -\varepsilon 3^k\omega &0 & \frac{q-1}{2}&\frac{-1-3^k\sqrt{-3}}{2} & \frac{-1+3^k\sqrt{-3}}{2}\\
\mu_2&\frac{3^k(q-1)}{2}       & \frac{3^k(q-1)}{2} & \frac{-3^k+ 3^{2k} \sqrt{-3}}{2} &\frac{-3^k- 3^{2k} \sqrt{-3}}{2} & -\varepsilon3^k& -\varepsilon3^k \overline{\omega}& -\varepsilon 3^k\omega& 0&-\frac{q-1}{2}& \frac{+1+3^k\sqrt{-3}}{2}& \frac{+1-3^k\sqrt{-3}}{2} \\
\overline{\mu_1}&\frac{3^k(q-1)}{2}&\frac{3^k(q-1)}{2}&\frac{-3^k- 3^{2k} \sqrt{-3}}{2}& \frac{-3^k+ 3^{2k} \sqrt{-3}}{2}& -\varepsilon3^k& -\varepsilon 3^k\omega& -\varepsilon 3^k\overline{\omega} & 0&\frac{q-1}{2}&\frac{-1+3^k\sqrt{-3}}{2} &\frac{-1-3^k\sqrt{-3}}{2} \\
\overline{\mu_2}&\frac{3^k(q-1)}{2} &\frac{3^k(q-1)}{2}&\frac{-3^k- 3^{2k} \sqrt{-3}}{2}&\frac{-3^k+ 3^{2k} \sqrt{-3}}{2}& -\varepsilon3^k& -\varepsilon 3^k\omega& -\varepsilon 3^k\overline{\omega} &0 &-\frac{q-1}{2}&\frac{+1-3^k\sqrt{-3}}{2} & \frac{+1+3^k\sqrt{-3}}{2}\\
\chi &3^k(q-1)      & 3^k(q-1)   &-3^k+ 3^{2k} \sqrt{-3} &-3^k- 3^{2k} \sqrt{-3}&  \varepsilon3^k&\varepsilon3^k \overline{\omega}& \varepsilon3^k\omega&0 &0&0   &0 \\
\overline{\chi}&3^k(q-1)      &  3^k(q-1) &-3^k- 3^{2k} \sqrt{-3} &-3^k+ 3^{2k} \sqrt{-3}& \varepsilon3^k &\varepsilon3^k\omega& \varepsilon3^k\overline{\omega}&0 &0& 0&0 \\
\psi  &q(q-1)       &-q   & 0&0 &0&0&0&0&0&0   &0   \\
\end{array}
\]
where $q=3^{2k+1}$, $\varepsilon$ a fixed number from $\{\pm1 \}$, $\omega=e^{2i\pi/3}$ and $\xi$ is a fixed primitive $(q-1)^{th}$ root of unity.

\end{table}
\end{landscape}

\section{The principal $3$-block of the Ree groups}

Throughout this section, $G$ will denote the small Ree group $^2G_2(q)$ of order $q^3(q^3+1)(q-1)$ \cite{Ree}, with $q=3^{2k+1}\geq 27$. 
The groups $G$ were first described by Ree \cite{Ree} and their structure was determined in detail by Ward \cite{Ward}, including most of the character table. 
Our notation for the conjugacy classes and characters follows the notation introduced in \cite{Ward}; in particular, $m$ denotes the number $3^k$.
For the convenience of the reader the character table of $^2G_2(q)$, as contained in \cite{Ward}, is given in Table~\ref{tb:CharacterTabReeGp}, which can be found at the end of this paper.

For $k$ an algebraically closed field of characteristic $3$, the group algebra $kG$ decomposes into two blocks: the principal $3$-block $B_0(kG)$ and one block of defect zero containing the Steinberg character $\xi_3$ of degree $q^3$ (see Table \ref{tb:ReeCharacters}). 

\begin{table}[h]
\centering
\caption{The irreducible characters of $G$ \cite{Ward}}\label{tb:ReeCharacters}
\small
\[\begin{array}{ccc|lcccc}
& & \text{number of}&&& \text{number of}\\
\theta & \theta(1_G) &  \text{ characters} &\theta & \theta(1_G) & \text{ characters} \\\hline
\xi_1& 1& 1 & \xi_9&m(q^2-1)& 1\\
\xi_2&q^2-q+1& 1& \xi_{10}&m(q^2-1)& 1\\
\xi_3&q^3& 1& \eta_r&q^3+1& (q-3)/4\\
\xi_4&q(q^2-q+1)& 1 & \eta'_r&q^3+1& (q-3)/4 \\
\xi_5&(q-1)m(q+1+3m)/2& 1 & \eta_t&(q-1)(q^2-q+1)& (q-3)/24   \\
\xi_6&(q-1)m(q+1-3m)/2& 1 & \eta'_t&(q-1)(q^2-q+1)& (q-3)/8   \\
\xi_7&(q-1)m(q+1+3m)/2& 1 & \eta^{-}_i&(q^2-1)(q+1+3m)& (q-3m)/6  \\
\xi_8&(q-1)m(q+1-3m)/2& 1 & \eta^{+}_i&(q^2-1)(q+1-3m)& (q+3m)/6  \\
\end{array}
\]\normalsize \end{table}

\begin{table}[h]
\centering
\caption{Conjugacy classes, their centraliser orders and corresponding defects}\label{tb:ReeCentOrders}
\[
\begin{array}{lccrcrcccc}
\text{conjugacy class} &\text{ order$(g)$ } & |C_G(g)|& d_g\\\hline
1_G&1&                  |G| & 3(2m+1)\\
R^a,\; 1\leq a\leq  (q-3)/4   & (q-1)/2 & q-1&0\\
S^a, \;1\leq a\leq (q-3)/24    &(q+1)/4    &q+1& 0\\
V_i, \;1\leq i\leq   (q-3m)/6    &q-\sqrt{3q}+1&    q+1-3m&0\\
W_i, \;1\leq i\leq  (q-3m)/6     &q+\sqrt{3q}+1&  q+1+3m&0\\
X&3&  q^3&3(2m+1)\\
Y&9&    3q&(2m+1)+1\\
T&3&    2q^2&2(2m+1)\\
T^{-1} &3&  2q^2&2(2m+1)\\
YT&9&   3q& (2m+1)+1\\
YT^{-1}&9&         3q& (2m+1)+1\\
JT&6&          2q &(2m+1)\\
JT^{-1}&6&       2q&(2m+1)\\
JR^a, \; 1\leq a\leq  (q-3)/4       &2|R^a|  &q-1&0\\
JS^a,  \; 1\leq a\leq  (q-3)/8       &2|S^a|  &q+1&0\\
J&2&  (q+1)q(q-1)&(2m+1)\\
\end{array}\] \end{table}

Table \ref{tb:ReeCentOrders} lists the conjugacy classes of $G$.
Note that for the six families of conjugacy classes, given by $R^a$, $S^a$, $V_i$, $W_i$, $JR^a$, $JS^a$, the elements don't have the order stated, but rather their order divides the given value.
Furthermore, the size of the centralisers for these elements are taken from \cite[Section 3]{Jones}.
Let
\[
\mathcal{S}=\{R^a, S^a, V_i,W_i, JR^a, JS^a\};
\]
$|\mathcal{S}|=q-2$.
With slight abuse of notation, we also say that a conjugacy class $\C(g)$ is in $\mathcal{S}$ or a conjugacy class sum $\Chat(g) \in \mathcal{S}$, if we want to refer to one of these families of conjugacy classes.

\vspace{2mm}
As mentioned above, Ward's character table \cite{Ward} is not quite complete. 
Some entries are missing; however since only their sum is of interest to us, orthogonality of the columns can be applied to find the required information. 
The following table provides a list of column orthogonality relations we consider and the implied relation upon character values.

\begin{table}[h]
\centering
\caption{Useful orthogonality relations}\label{tb:UsefulOrthRel}
\[
\begin{array}{cl|l}
& \text{$\C_1$ and $\C_2$  } & \text{ Implication from column orthogonality }\\[7pt] \hline
& R^a,JT & \sum_r \eta_r(R^a)= \sum_r \eta_r'(R^a)\\[7pt]
& R^a, 1_G& \sum_r \eta_r(R^a)=-1\\[7pt] \hline
& S^a,Y& 4- \sum_t \eta_t(S^a)-\sum_t \eta_t'(S^a)=0\\[7pt]
\text{combined with} & S^a,J& \sum_t \eta_t(S^a)=1,\;\; \sum_t \eta_t'(S^a)=3\\[7pt] \hline
& V_i,Y & \sum_i \eta_i^{-}(V_i)=1 \\[7pt] \hline
& W_i,Y & \sum_i \eta_i^{+}(W_i)=1 \\[7pt] \hline
& JR^a,1 &  \sum_r \eta_r(JR^a)= - \sum_r \eta_r'(JR^a)\\[7pt]
\text{combined with} & JR^a,JT& \sum_r \eta_r(JR^a)=-1,\;\;\sum_r \eta_r'(JR^a)=1\\[7pt]\hline
& JS^a,Y& - \sum_t \eta_t(JS^a)-\sum_t \eta_t'(JS^a)=0\\[7pt]
\text{combined with} & JS^a,J& \sum_t \eta_t(JS^a)=1,\;\; \sum_t \eta_t'(S^a)=-1\\[7pt] \hline
\end{array}
\]
\end{table}

We now write down the two block idempotents, where the equivalence is taken modulo  $J(\mathcal{O})G$.
\[
\begin{array}{ccl}
\hat{e}_{\xi_3}&=&\frac{\xi_3(1)}{|G|}\sum\limits_{g\in G} \xi_3(g^{-1}) g\\
&=&\frac{q^3}{q^3(q^3+1)(q-1)}\left(q^3+ \sum\limits_{a=1}^{(q-3)/4} \Chat(R^a)- \sum\limits_{a=1}^{(q-3)/24} \Chat(S^a) - \sum\limits_{i=1}^{(q-3m)/6} \Chat(V_i) \right.\\
&&\;\;\;\; \;\;\;\;\left.  - \sum\limits_{i=1}^{(q+3m)/6} \Chat(W_i)  + \sum\limits_{a=1}^{(q-3)/4} \Chat(JR^a)-\sum\limits_{a=1}^{(q-3)/8} \Chat(JS^a) +q\cdot\Chat(J) \right)\\
&\equiv& -\sum\limits_{a=1}^{(q-3)/4} \Chat(R^a)+ \sum\limits_{a=1}^{(q-3)/24} \Chat(S^a) + \sum\limits_{i=1}^{(q-3m)/6} \Chat(V_i)+ \sum\limits_{i=1}^{(q+3m)/6} \Chat(W_i)\\
&&\;\;\;\;\hspace{70mm} - \sum\limits_{a=1}^{(q-3)/4} \Chat(JR^a)+\sum\limits_{a=1}^{(q-3)/8} \Chat(JS^a)      \\
\end{array}
\]
and as $1=\hat{e}_0+\hat{e}_{\xi_3}$, it follows that $\hat{e}_0= 1-\hat{e}_{\xi_3} \;\;\in \mathcal{O}G$.

\subsection{Main Theorem for Ree groups}\label{Sec:MainThRee}

The aim is to show that the Loewy length of $Z(kGe_0)$ is $3$; in particular, over a series of lemmas comprising the rest of this section, the following theorem is proven.

\begin{theorem}\label{th:MainReeLL3}
Let $G=$ $^2G_2(q)$ where $q=3^{2k+1}\geq 27$, and $k$ an algebraically closed field of characteristic $3$. Then $LL(Z(kGe_0))=3$.
\end{theorem}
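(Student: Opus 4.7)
The plan is to prove $LL(Z(kGe_0)) = 3$ by establishing both bounds $J^3 = 0$ and $J^2 \neq 0$, where $J := J(Z(kGe_0))$. The first step is to fix a convenient basis of $Z(kGe_0)$ and locate $J$ inside it. Since $\xi_3$ is a defect-zero character, its central character $\omega_{\xi_3}$ annihilates every class sum $\Chat(g)$ with $d_g>0$; consequently $\Chat(g) = \Chat(g)e_0 \in Z(kGe_0)$ for each of the ten positive-defect classes $1_G, X, Y, T^{\pm 1}, YT^{\pm 1}, J, JT^{\pm 1}$, while on the defect-zero classes in $\mathcal{S}$ the projections $\Chat(g)e_0$ span a $(q-3)$-dimensional subspace subject to the one relation coming from the explicit formula for $\hat{e}_{\xi_3}$ above. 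The radical $J$ is then cut out by vanishing of the central characters $\omega_\chi$ for $\chi \in \mathrm{Irr}(B_0)$.

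For the upper bound $J^3 = 0$, I would exploit the defect filtration provided by Corollary~\ref{th:defectConClass}: multiplying two class sums only produces class sums of defect at most the smaller of the two inputs, so iterated products concentrate into low-defect classes. After a careful accounting, triple products land in the span of the defect-zero class sums, which in the principal block are forced into the socle (using the $\hat{e}_{\xi_3}$ relation to kill the one remaining direction). For the purely $3$-element part of the argument, Proposition~\ref{prop:a(xyz)generic} transfers the structure constants modulo $3$ to those of the Sylow normaliser $N_G(P)$, where $J(Z(kN_G(P)))^2 = 0$ by Theorem~\ref{th:Snowman}. The mixed contributions involving the involution classes $J$ and $JT^{\pm 1}$ must then be handled directly via Burnside's formula \eqref{eq:StructureConstants}, with character sums over the six defect-zero families $R^a, S^a, V_i, W_i, JR^a, JS^a$ evaluated through the column-orthogonality relations of Table~\ref{tb:UsefulOrthRel}, since Ward's character table is only partially known on those classes.

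For the lower bound $LL \geq 3$, which I expect to be the main obstacle, the task is to exhibit two specific elements of $J$ whose product is nonzero in $Z(kGe_0)$. Since the Brauer-correspondent reduction forces all products within the $3$-element classes to collapse after two multiplications, any witness must genuinely exploit the involution classes of $G$. A natural candidate is a product such as $\Chat(J)\cdot \Chat(X)$ or $\Chat(J)^2$, computed modulo $3$ and then projected onto $Z(kGe_0)$; alternatively one may combine $\Chat(J)$ with a suitable radical element supported on $3$-element classes. Evaluating the required structure constants via Burnside's formula reduces, thanks to Table~\ref{tb:UsefulOrthRel}, to a finite computation, and the main technical difficulty is tracking the denominators arising from Burnside's formula and verifying that no accidental cancellation modulo $3$ occurs among the contributions coming from $\mathcal{S}$. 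Once a single pair in $J\times J$ with nonzero product is produced, combined with $J^3 = 0$ the theorem follows.
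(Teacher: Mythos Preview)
Your strategy for the lower bound contains a genuine misconception. You write that Proposition~\ref{prop:a(xyz)generic} ``forces all products within the $3$-element classes to collapse after two multiplications,'' and conclude that ``any witness must genuinely exploit the involution classes of $G$.'' But Proposition~\ref{prop:a(xyz)generic} only controls the structure constants $a(x,y,z)$ when \emph{all three} of $x,y,z$ lie in $Q\setminus\{1\}$; it says nothing about the components of $\Chat(x)\Chat(y)$ supported on classes in $\mathcal{S}$, which have no analogue in $N_G(P)$. In fact the paper's witnesses for $J^2\neq 0$ are precisely $(1+\Chat(X))^2e_0$ and $\Chat(T)^2e_0$, both built from $3$-element class sums. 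What survives is their $\mathcal{S}$-supported part: one finds $\Chat(T)^2e_0 = 2\sum\Chat(R^a)+\sum\Chat(JR^a)+2\sum\Chat(JS^a)\neq 0$. Conversely, your proposed candidates fail: from the explicit table one gets $\Chat(J)^2 = e_{\xi_3}$ and $(1+\Chat(X))\Chat(J)=e_{\xi_3}$, so both vanish after multiplying by $e_0$. The involution class $J$ is therefore exactly the wrong place to look.

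Your upper-bound sketch also has a gap. You assert that triple products land in the span of defect-zero class sums and that the $\hat e_{\xi_3}$ relation then ``kills the one remaining direction,'' but that span is $(q-3)$-dimensional in $Z(kGe_0)$, not one-dimensional. The defect filtration alone is far too coarse. The paper's argument is that after the full computation, every nonzero product of two elements of $\mathfrak{D}_G$ is supported on $\mathcal{S}\setminus\{S^a\}$, and the table then shows that any such element multiplied by any third element of $\mathfrak{D}_G$ lands in $k\cdot e_{\xi_3}$, hence is zero in the principal block. The specific exclusion of $\C(X),\C(T^{\pm1}),\C(S^a)$ from the support of double products is what makes the triple vanish, and that requires the explicit structure constants rather than a general filtration argument.
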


\vspace{2mm}
By Table \ref{tb:ReeCentOrders}, all non-trivial conjugacy classes have class size divisible by $3$ except $\C(X)$ which has size $|\C(X)|= (q^3+1)(q-1)$.
Hence a spanning set for $J(Z(kGe_0))$ is given by
\[ 
\mathfrak{D}_G=\{ \Chat(x)e_0\;|\; x\in \mathscr{P},  x\neq 1_G,x\not\in \C(X)\} \cup \{(\Chat(X)+1)e_0\}.
\]
Firstly, the multiplication of two  conjugacy class sums in $\mathfrak{D}_G$ is considered, ignoring the block idempotent $e_0$. 
For a clear overview, the outcomes of these multiplications are summarised in Table \ref{tb:SummaryClassMultRee}.

\subsubsection{Computing the product of any two class sums}

In this section we compute all the products of two class sums.
For most of the algebra structure constants we manipulate the formula of Burnside in such a way so that a minimal amount of computation has to be done; however some of the constants will require a complete calculation of Burnside's formula.
Let $cc(G)$ denote the set of conjugacy classes of $G$.

Before we compute the algebra structure constants, we first need the following lemma which helps us consider the terms arising in $a(x,y,z)$ from the characters $\eta_r,\eta_r',\eta_t,\eta_t'\eta_l^-$ and $\eta_l^+$ as in Table~\ref{tb:CharacterTabReeGp} taken from Ward \cite{Ward}.

\begin{lemma}\label{etaChar}
Let $\C(y)\in \mathcal{S}$ and $x,z\in$ $^2G_2(q)$.
Then
\[
\sum\limits_r \eta_r(x)\eta_r(y)\eta_r(z^{-1})\;\;, \;\sum\limits_r \eta_r'(x)\eta_r'(y)\eta_r'(z^{-1})\;\;, \; \sum\limits_t \eta_t(x)\eta_t(y)\eta_t(z^{-1}).
\]
\[
\sum\limits_t \eta_t'(x)\eta_t'(y)\eta_t'(z^{-1})\;\;, \;\sum\limits_l \eta_l^-(x)\eta_l^-(y)\eta_l^-(z^{-1})\;\;\text{ and } \; \sum\limits_l \eta_l^+(x)\eta_l^+(y)\eta_l^+(z^{-1}).
\]
all lie in $\mathbb{Z}$.
\begin{proof}
We shall only consider the case $\C(y)=\C(R^a)$ as the other cases follow by similar arguments.
Furthermore, we shall only calculate the following sum for $\eta_r$ as the situation for $\eta_r'$ is similar:
\begin{equation}\label{eq:eta}
\sum\limits_r \eta_r(x)\eta_r(R^a)\eta_r(z^{-1}).
\end{equation}

Note that from the character table, Table~\ref{tb:CharacterTabReeGp}, the characters $\eta_r$ takes values inside $\mathbb{Z}$ on elements not lying in $\C(R^a)$ or $\C(JR^a)$.
Furthermore from \cite[Section I]{Ward} it follows that $\eta_r(JR^a)=\eta_r(R^a)$ or $-\eta_r(R^a)$ and $\eta_r(R^a)=\epsilon (r^a+r^{-a})$ where $\epsilon=\pm 1$. 
Using this we obtain the following possibilities for Equation~\ref{eq:eta}.

If $x$ and $z\not\in \cup_a\left( \C(R^a)\cup\C(JR^a)\right)$ then
\[
\sum\limits_r \eta_r(x)\eta_r(R^a)\eta_r(z^{-1})= n\sum\limits_r \eta_r(R^a),
\]
for some $n\in \mathbb{Z}$.

If only one of $x$ or $z$ lies in some $\C(R^{a_1})\cup\C(JR^{a_1})$ then
\[
\sum\limits_r \eta_r(x)\eta_r(R^a)\eta_r(z^{-1})= n\sum\limits_r \big{(} \eta(R^{a+a_1})+\eta_r(R^{a-a_1})\big{)},
\] 
for some $n\in \mathbb{Z}$.

While if $x\in \C(R^{a_1})\cup\C(JR^{a_1})$ and $z\in \C(R^{a_2})\cup \C(JR^{a_2})$ then
\[
\sum\limits_r \eta_r(x)\eta_r(R^a)\eta_r(z^{-1})= n\sum\limits_r \big{(} \eta(R^{a+a_1+a_2})+\eta_r(R^{a+a_1-a_2})+\eta(R^{a-a_1+a_2})+\eta(R^{a-a_1-a_2})\big{)},
\] 
for some $n\in \mathbb{Z}$.

By Table~\ref{tb:UsefulOrthRel} it now follows that Equation~\ref{eq:eta} evaluates to an element in $\mathbb{Z}$.
\end{proof}
\end{lemma}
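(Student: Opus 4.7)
The plan is to reduce each of the six sums to a finite integer linear combination of partial column sums of the character table, and then invoke the column-orthogonality relations tabulated in Table~\ref{tb:UsefulOrthRel} to conclude integrality. The six cases are essentially parallel, so I would write out in detail only the case $\C(y) = \C(R^a)$ with the character family $\eta_r$; the other five families are handled by the identical template, keyed to the corresponding rows of the character table of $^2G_2(q)$.

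First I would read off from Ward's character table the critical structural fact underlying the whole argument: $\eta_r$ takes rational-integer values on every conjugacy class outside $\bigcup_a \bigl( \C(R^a) \cup \C(JR^a) \bigr)$, while on these remaining classes it has the shape $\eta_r(R^a) = \epsilon (r^a + r^{-a})$ with $\eta_r(JR^a) = \pm \eta_r(R^a)$, where $r$ ranges over primitive roots of unity and $\epsilon \in \{\pm 1\}$ is independent of $r$. This is the observation I would cite from \cite{Ward} that lets the sums collapse.

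Next I would case-split on where $x$ and $z^{-1}$ sit. If neither lies in an $R$- or $JR$-class, then $\eta_r(x)\eta_r(z^{-1})$ is a single integer $n$ independent of $r$, so the sum equals $n \sum_r \eta_r(R^a) = -n$ by Table~\ref{tb:UsefulOrthRel}. If exactly one of them, say $x \in \C(R^{a_1}) \cup \C(JR^{a_1})$, lies in such a class, the product-to-sum identity $(r^a + r^{-a})(r^{a_1} + r^{-a_1}) = r^{a+a_1} + r^{a-a_1} + r^{-(a+a_1)} + r^{-(a-a_1)}$ collapses the sum to $n \bigl( \sum_r \eta_r(R^{a+a_1}) + \sum_r \eta_r(R^{a-a_1}) \bigr)$ for some $n \in \mathbb{Z}$, again integral. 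If both $x$ and $z^{-1}$ sit in such classes, applying the same identity twice yields an integer combination of four analogous partial column sums, each integral by the same orthogonality relation.

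The remaining five sums are handled by the same three-case template applied to the rows $\eta_r'$, $\eta_t$, $\eta_t'$, $\eta_l^-$, $\eta_l^+$ and the corresponding family among $\{S^a\}, \{V_i\}, \{W_i\}, \{JR^a\}, \{JS^a\}$: in each case the non-integer character entries have the same shape $\epsilon(\zeta^i + \zeta^{-i})$ for a suitable primitive root of unity $\zeta$, and the product-to-sum expansion reduces the sum to an integer linear combination of partial column sums tabulated in Table~\ref{tb:UsefulOrthRel}. The main bookkeeping obstacle is tracking how the sign $\epsilon$ and the index shifts between an $R$-class and its $JR$-twin (and analogously $S^a$ versus $JS^a$, and the Frobenius-twin pairs $V_i,W_i$) propagate through the expansion; but since every $J$-twist contributes only a $\pm 1$ that is absorbed into the integer coefficient, the scheme is uniform across all six families.
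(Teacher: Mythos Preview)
Your proposal is correct and follows essentially the same approach as the paper: reduce to the case $\C(y)=\C(R^a)$ with the family $\eta_r$, case-split on whether $x$ and $z$ lie in $\bigcup_a(\C(R^a)\cup\C(JR^a))$, use the product-to-sum identity for $(r^a+r^{-a})(r^{a_1}+r^{-a_1})$ to collapse the sum into an integer combination of sums $\sum_r \eta_r(R^{a'})$, and finish via Table~\ref{tb:UsefulOrthRel}. You make the product-to-sum step and the $r$-independence of $\eta_r$ outside the $R/JR$ classes slightly more explicit than the paper does, but the argument is otherwise identical.
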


\begin{lemma}
Let $\C(x)\in cc(G)\setminus\{X,1_G\}$ and $\C(y)\in \mathcal{S}$. Then 
\[\Chat(x)\cdot \Chat(y)=
 \left\{
            \begin{array}{ll}
              e_{\xi_3} , & \hbox{ if $\C(x) \in \mathcal{S} \text{ or } \{\C(J)\}$;} \\
              0 , & \hbox{ otherwise.}
            \end{array}
          \right.
\]
\end{lemma}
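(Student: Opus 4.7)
The plan is to compute $\Chat(x)\Chat(y)$ in $kG$ by combining Burnside's formula (\ref{eq:StructureConstants}) with a block-wise decomposition. Since $\C(y)\in\mathcal{S}$ has $3$-defect zero, Corollary~\ref{th:defectConClass} immediately restricts the support to $\mathcal{S}$:
\[
\Chat(x)\Chat(y) \;=\; \sum_{\C(z)\in\mathcal{S}} a(x,y,z)\,\Chat(z) \qquad \text{in } kG.
\]
Writing $\Chat(x)\Chat(y) = \Chat(x)\Chat(y)\,e_0 + \Chat(x)\Chat(y)\,e_{\xi_3}$ and using that $kGe_{\xi_3}$ is a one-dimensional defect-zero block, one has $\Chat(x)\Chat(y)\,e_{\xi_3} = \omega_{\xi_3}(\Chat(x))\,\omega_{\xi_3}(\Chat(y))\,e_{\xi_3}$, where $\omega_{\xi_3}(\Chat(w)) = |\C(w)|\xi_3(w)/q^3$. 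The lemma therefore reduces to two tasks: (i) show the Steinberg scalar is $1 \bmod 3$ when $\C(x)\in\mathcal{S}\cup\{\C(J)\}$ and $0 \bmod 3$ otherwise; (ii) show $\Chat(x)\Chat(y)\,e_0 = 0$ in $kG$.

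For task (i), I would observe that $\xi_3$ vanishes on every $3$-singular class, so $\omega_{\xi_3}(\Chat(x)) = 0$ whenever $\C(x)\in\{\C(Y),\C(T^{\pm 1}),\C(YT^{\pm 1}),\C(JT^{\pm 1})\}$, which disposes of the ``otherwise'' case at once. For $\C(w)\in\mathcal{S}\cup\{\C(J)\}$ (all $3$-regular), the factor $|\C(w)|/q^3$ is a $3'$-integer, and reducing modulo $3$ via $q\equiv 0$, $q-1\equiv -1$, $q^3+1\equiv 1$, one checks family-by-family---reading off $\xi_3(w)$ from Ward's Table~\ref{tb:CharacterTabReeGp} and comparing with the signs appearing in the expansion of $e_{\xi_3}$ derived above---that $\omega_{\xi_3}(\Chat(w))\equiv 1 \pmod 3$. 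Multiplying yields the required scalar.

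For task (ii), I would expand $a(x,y,z)$ via Burnside's formula and split the sum over $\mathrm{Irr}(G)$ into the Steinberg term and the sum over $B_0$. The $\xi_3$-piece, combined with the expansion of $e_{\xi_3}$ as a signed sum over $\mathcal{S}$, already matches $\Chat(x)\Chat(y)\,e_{\xi_3}$, so (ii) is equivalent to showing that for every $\C(z)\in\mathcal{S}$,
\[
\frac{|G|}{|C_G(x)||C_G(y)|}\sum_{\theta\in B_0} \frac{\theta(x)\theta(y)\overline{\theta(z)}}{\theta(1)} \;\equiv\; 0 \pmod 3.
\]
The ten characters $\xi_1,\dots,\xi_{10}$ are handled by direct inspection of Ward's table. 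For the families $\eta_r,\eta_r',\eta_t,\eta_t',\eta_l^{\pm}$, Lemma~\ref{etaChar} reduces each family-sum to $\mathbb{Z}$, and the orthogonality identities collected in Table~\ref{tb:UsefulOrthRel} collapse them into closed expressions whose $3$-adic valuation can be compared against that of $|C_G(x)||C_G(y)|/|G|$. \textbf{The main obstacle} is precisely this bookkeeping: the cyclotomic values of the $\eta$-characters on the $9$-element classes $\C(Y)$ and $\C(YT^{\pm 1})$, together with the parameter-dependent values of $\eta_r,\eta_r'$ on $\C(R^a)$ and $\C(JR^a)$, require a uniform case analysis across all pairs $(\C(x),\C(y))$ to verify the mod-$3$ cancellation.
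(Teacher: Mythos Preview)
Your block-idempotent decomposition is a perfectly valid reframing, and task~(i) goes through exactly as you outline: the central character $\omega_{\xi_3}(\Chat(w))=|\C(w)|\xi_3(w)/q^3$ reduces to $1\pmod 3$ for each $w\in\mathcal{S}\cup\{J\}$ and to $0$ on every $3$-singular class, which recovers the $e_{\xi_3}$-summand cleanly. The paper does not phrase things this way; it computes $a(x,y,z)$ directly and identifies the surviving piece with the coefficients of $e_{\xi_3}$ at the end. Your route is arguably more conceptual here.

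Where you diverge from the paper is in task~(ii), and the ``main obstacle'' you flag is largely illusory. The paper observes a single uniform estimate: since $|C_G(y)|_3=1$ and $|C_G(x)|_3\le q^2$ for every $\C(x)\ne\C(1_G),\C(X)$, the front factor $|G|/(|C_G(x)||C_G(y)|)$ has $3$-part at least $q$. Every $\theta\in\mathrm{Irr}(G)$ with $\theta\ne\xi_3,\xi_4$ has $|\theta(1)|_3\le 3^k<q$, and its values on $G$ are $3$-integral (for the $\eta$-families this is exactly what Lemma~\ref{etaChar} supplies). Hence each such term in Burnside's formula already carries positive $3$-valuation, with no case analysis over $(\C(x),\C(y))$ required. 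Only $\xi_3$ and $\xi_4$ survive. Then either $\C(x)$ is $3$-singular outside $\C(J)$, in which case $\xi_3(x)=\xi_4(x)=0$ and the ``otherwise'' case is immediate; or $\C(x)\in\mathcal{S}\cup\{\C(J)\}$, in which case $|C_G(x)|_3\le q$ forces the front factor's $3$-part up to at least $q^2$, killing the $\xi_4$-term as well. What remains is the $\xi_3$-contribution alone, which is your task~(i).

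Two small corrections to your anticipated difficulties: the $\eta$-characters take \emph{integer} values $\pm1$ on $Y,YT,YT^{-1}$ (see Table~\ref{tb:CharacterTabReeGp}), so there are no cyclotomic subtleties on the order-$9$ classes; and the parameter-dependent values $\eta_r(R^a),\eta_r(JR^a)$ only enter through the integral sums of Lemma~\ref{etaChar}, after which the same valuation bound applies uniformly. In short, your proposal is correct in outline, but the bookkeeping you expect to need for~(ii) collapses once you track the $3$-valuation of the leading coefficient against $|\theta(1)|_3$.
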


\begin{proof}
First observe that $|C_G(y)|_3=1$ and since $q>3$, $|C_G(x)|_3\leq q^2$.
Thus
\[
\left| \frac{q^3(q-1)(q^3+1)}{|C_G(x)||C_G(y)|}\right| _3\geq q,
\]

For the characters $\xi_i$ with $i\ne 3$ or $4$, it can be seen that $|\xi_i(1)|_3<q$ and no value in the corresponding row contains a three in the denominator.
Thus any term in $a(x,y,z)$ arising from $\xi_i$ with $i\ne 3$ or $4$ is equivalent to zero modulo $J(\mathcal{O})G$.

Next we consider the terms in $a(x,y,z)$ which arise from the characters $\eta_r,\eta_r',\eta_t,\eta_t',\eta_l^-$ and $\eta_l^+$.
By combining Lemma~\ref{etaChar} with the additional fact that $|\theta(1)|_3=1$ for each such character $\theta$, the terms in $a(x,y,z)$ arising from these characters are equivalent to zero modulo $J(\mathcal{O})G$.
In particular, to compute $a(x,y,z)$ modulo $J(\mathcal{O})G$ only $\theta\in\{\xi_3,\xi_4\}$ remain to be considered.

If $\C(x)\not\in \{\mathcal{S}, 1_G,\C(J),\C(X)\}$, then $\xi_3(x)=\xi_4(x)=0$. 
Hence for $\C(x)\not\in \{\mathcal{S},1_G,\C(J),\C(X)\}$ it follows that $a(x,y,z)\equiv 0$ modulo $J(\mathcal{O})G$.

Thus is remains to study $\C(x)\in \{\mathcal{S},\C(J)\}$.
In this case $|C_G(x)|_3\leq q$.
Hence as 
\[
|\xi_4(1)|_3=q<q^2\leq \left| \frac{q^3(q-1)(q^3+1)}{|C_G(x)||C_G(y)|}\right| _3,
\]
the term for $\xi_4$ in $a(x,y,z)$ is congruent to zero modulo $J(\mathcal{O})G$.
Thus 
\[
a(x,y,z)\equiv \frac{q^3(q-1)(q^3+1)}{|C_G(x)||C_G(y)|} \left( \frac{\xi_3(x)\xi_3(y)\xi_3(z^{-1})}{q^3}\right).
\]

Note the following information about character values of $\mathcal{S}$.
\begin{table}[h]
\centering
\caption{}\label{tb:ValuesOnS}
\[
\begin{array}{r|c|r|c}
 &&& \text{coefficient of} \\
\text{element} & |C_G(g)|\; {\rm mod}\; 3 & \xi_3(g)&\Chat(g) \text{ in } e_{\xi_3} \\\hline
R^a&-1 &+1&-1\\
S^a&+1&-1&+1\\
V_i&+1&-1&+1\\
W_i&+1&-1&+1\\
JR^a&-1&+1&-1\\
JS^a&+1&-1&+1\\
\end{array}
\]
\end{table}
Also note that $\xi_3(J)=q$.

Hence
\[
a(x,y,z)\equiv \left\{ \begin{array}{ll}
                                 q^3\frac{(-1)}{(-1)}^a\cdot \frac{(-1)^a\xi_3(z^{-1})}{q^3} & \text{ if $\C(x)\in \mathcal{S}$};\\
                                q^2\frac{-1}{(-1)(\pm 1)}\cdot \frac{\mp q \xi_3(z^{-1})}{q^3} & \text{ if $\C(x)=\C(J)$}\\
                        \end{array}
                        \right.
\;\; \equiv -\xi_3(z^{-1}),
\]
for $a\in\{1,2\}$.

Finally, by considering the values $\xi_3$ takes, we see that
\[
a(x,y,z)\equiv -\xi_3(z^{-1})\equiv 
                                              \left\{ \begin{array}{ll}
                                              0 & \text{ if $z\not\in \mathcal{S}$};\\
                                             \text{coefficient of $\Chat(z^{-1})$ in $e_{\xi_3}$} & \text{ if $z\in \mathcal{S}$,}\\
                                              \end{array}
                                              \right.
\] 
where the values for $z\in \mathcal{S}$ follow from the information contained in Table~\ref{tb:ValuesOnS}.
\end{proof}

As we have computed $a(x,y,z)$ for $y\in \mathcal{S}$ for all $x$ except $a(X,y,z)$ Proposition~\ref{th:sumStrucConst} can now be used to find this final coefficient.

\begin{lemma}
Let $\C(y) \in \mathcal{S}$. 
Then $\Chat(X)\cdot \Chat(y)= e_{\xi_3}-\Chat(y)$. In particular
\[ a(X,y,z)=\left\{
              \begin{array}{rl}
                0 & \hbox{ if $y=z\in V_i,W_i,S^a,JS^a$;} \\
                -2& \hbox{ if  $y=z\in R^a, JR^a$; }\\
                \pm 1, & \hbox{ if $y\neq z$ and  $z\in \mathcal{S}$;} \\
                0, & \hbox{ if $z\not\in   \mathcal{S}$.}
              \end{array}
            \right.
\]
Hence $(1+\Chat(X))\cdot \Chat(y)= e_{\xi_3}$.
\end{lemma}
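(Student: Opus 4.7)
The plan is to derive the product $\Chat(X)\Chat(y)$ from Proposition~\ref{th:sumStrucConst} together with the preceding lemma, which already determines $\Chat(x)\Chat(y)$ modulo $J(\mathcal{O})G$ for every class representative $x \neq 1_G, X$. Since $a(X,y,z)$ is the last unknown structure constant in the column-sum identity, everything will be forced.

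Concretely, I would fix $\C(y)\in\mathcal{S}$ and $z\in G$ and start from
\[
 \sum_{x\in\mathscr{P}} a(x,y,z) = |\C(y)|.
\]
Because $\C(y)\in\mathcal{S}$, the centraliser $C_G(y)$ has order coprime to $3$, and so $|\C(y)|$ is divisible by $q^3$; in particular $|\C(y)|\equiv 0\pmod 3$. Next, I would split the sum on the left according to the three cases already known: $a(1_G,y,z)=\delta_{\C(y),\C(z)}$; by the previous lemma $a(x,y,z)$ reduces mod~$3$ to the coefficient of $\Chat(z)$ in $\hat{e}_{\xi_3}$ whenever $\C(x)\in\mathcal{S}\cup\{\C(J)\}$, and reduces to $0$ for every other $x\neq X$. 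Denote the coefficient of $\Chat(z)$ in $\hat{e}_{\xi_3}$ by $c_z$; it is read off from the explicit expansion of $\hat{e}_{\xi_3}$ given earlier, so $c_z\in\{0,\pm 1\}$.

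The key numerical input is the count $|\mathcal{S}\cup\{\C(J)\}| = (q-2)+1 = q-1$. Since $q=3^{2k+1}\equiv 0\pmod 3$, we have $q-1\equiv -1\pmod 3$, so the contribution of these $q-1$ classes to the sum is $-c_z$ modulo $3$. Substituting everything into the column-sum identity and reducing modulo $3$ gives
\[
 a(X,y,z) + \delta_{\C(y),\C(z)} - c_z \equiv 0 \pmod 3,
\]
hence $a(X,y,z) \equiv c_z - \delta_{\C(y),\C(z)} \pmod 3$. This says precisely that $\Chat(X)\Chat(y) \equiv \hat{e}_{\xi_3} - \Chat(y)$ modulo $J(\mathcal{O})G$, which is the first assertion of the lemma.

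The explicit case distinction for $a(X,y,z)$ then drops out by inspecting $c_z$: when $y=z\in\{R^a,JR^a\}$ one has $c_y=-1$, giving $-2$; when $y=z\in\{V_i,W_i,S^a,JS^a\}$ one has $c_y=+1$, giving $0$; when $y\neq z$ but $z\in\mathcal{S}$ the value is $c_z=\pm 1$; when $z\notin\mathcal{S}$ the value is $0$. Finally, $(1+\Chat(X))\Chat(y) = \Chat(y) + (\hat{e}_{\xi_3}-\Chat(y)) = \hat{e}_{\xi_3}$, establishing the last claim. The only mild subtlety is bookkeeping: one must make sure the count $q-1$ of contributing classes is taken with the correct multiplicity (each class $\C(x)$ in $\mathcal{S}\cup\{\C(J)\}$ contributes the same coefficient $c_z$, irrespective of $x$), so the proof is essentially a single mod-$3$ calculation rather than any new group-theoretic argument.
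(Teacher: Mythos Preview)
Your proposal is correct and follows essentially the same approach as the paper: both use Proposition~\ref{th:sumStrucConst} together with the preceding lemma to isolate $a(X,y,z)$ as the last unknown in the identity $\sum_{x}a(x,y,z)=|\C(y)|\equiv 0\pmod 3$. The only cosmetic difference is that the paper lists the contribution of $\C(J)$ separately from the $q-2$ classes in $\mathcal{S}$, whereas you group them together as $q-1$ classes all contributing $c_z$; the arithmetic is identical.
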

\begin{proof}
Recall  that $\mathcal{S}$ consists of $q-2$ conjugacy classes. For $y$ in $\mathcal{S} $ we have so far calculated all structure constants $a(x,y,z)$ except $a(X,y, z)$. Hence we can use Proposition \ref{th:sumStrucConst} to find these remaining ones. All equivalences are taken modulo $J(\mathcal{O})G$.

We have $|\C(y)|\equiv 0$. Hence by  Proposition \ref{th:sumStrucConst}, $|\C(y)|=\sum_{x} a(x,y,z)\equiv 0$. Let $\alpha= a(X,y,z)$. Then
\[\sum_{x} a(x,y,z)= a(1_G,y,z)+a(J,y,z)+(q-2)\sum\limits_{x\in\mathcal{S}}a(x, y, z)+\alpha\]

\[= \left\{
      \begin{array}{ll}
        1+1+(q-2)(+1)+ \alpha\equiv \alpha, & \hbox{ if $y=z\in V_i,W_i,S^a,JS^a$;} \\
        1-1+(q-2)(-1) +\alpha \equiv 2+\alpha, & \hbox{ if $y=z\in R^a, JR^a$;} \\
        0+1+(q-2)(+1)+\alpha\equiv -1+\alpha & \hbox{ if $y\neq z$ and $z\in V_i,W_i,S^a,JS^a$;} \\
        0+(-1)+(q-2)(-1)+\alpha \equiv \alpha+1 & \hbox{ if $y\neq z$ and $z\in R^a, JR^a$;} \\
        \alpha, & \hbox{ if $z\not\in\mathcal{S} $.}
      \end{array}
    \right.
\]\end{proof}

We have now dealt with the case that either $x$ or $y$ is taken from $\mathcal{S}$.

\begin{lemma}
Let $x,y\in \C(Y),\C(YT),\C(YT^{-1}),\C(JT),\C(JT^{-1})$ or $\C(J)$.
Furthermore assume that not both $x$ and $y$ are in $\C(J)$. 
Then $\Chat(x)\Chat(y)=0$.
\end{lemma}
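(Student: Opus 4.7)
The conclusion $\Chat(x)\Chat(y)=0$ in $kG$ amounts to the claim that $3\mid a(x,y,z)$ for every conjugacy class representative $z$. A first reduction, via Corollary~\ref{th:defectConClass}, gives $3\mid a(x,y,z)$ whenever $d_z>\min\{d_x,d_y\}$. Since every class listed in the hypothesis has defect at most $2k+2$, inspection of Table~\ref{tb:ReeCentOrders} shows that only $z\in \mathcal{S}\cup\{Y,YT,YT^{-1},J,JT,JT^{-1}\}$ needs attention.

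I would split the argument by the location of $x,y$ relative to a fixed Sylow $3$-subgroup $P$. Write $U=\C(Y)\cup \C(YT)\cup \C(YT^{-1})\subseteq P$ and $V=\C(J)\cup \C(JT)\cup \C(JT^{-1})$. There are three sub-cases: (i) $x,y\in U$; (ii) exactly one of $x,y$ in $U$; and (iii) $x,y\in V$ with not both in $\C(J)$. When $x,y,z$ all lie in $P$, Proposition~\ref{prop:a(xyz)generic} gives $a(x,y,z)\equiv a_{N_G(P)}(x,y,z)\pmod{|C_G(z)|_3}$ with $|C_G(z)|_3\geq 3q$; Burnside's formula inside $N_G(P)$, using the character table of Theorem~\ref{th:ReeNormFullCharTab}, then delivers $3\mid a_{N_G(P)}(x,y,z)$. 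For every other $(x,y,z)$ I apply Burnside's formula in $G$ directly.

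The outer coefficient $|G|/(|C_G(x)||C_G(y)|)$ has $3$-part $3^{2k-1}$, $3^{2k}$ or $3^{2k+1}$ in sub-cases (i), (ii), (iii) respectively, and in all three cases is divisible by $3$ since $k\geq 1$. The Steinberg character $\xi_3$ contributes nothing: the hypothesis forces at least one of $x,y$ to be $3$-singular (in sub-case (iii) this is because the exclusion of the pair $(J,J)$ leaves at least one of $JT$, $JT^{-1}$ of order $6$), and $\xi_3$ vanishes on every element whose order is divisible by $3$. The characters $\xi_5,\ldots,\xi_{10}$ have degree with $3$-part only $m=3^k$, easily absorbed by the outer factor, and Lemma~\ref{etaChar} bundles each of the six $\eta$-families into a rational integer whose joint $3$-adic contribution is therefore a unit.

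The main obstacle is the $\xi_4$ contribution, because $\xi_4(1)$ has $3$-part $q$, matching and potentially overrunning the outer factor in sub-cases (i) and (ii). Here one reads off from Ward's table (Table~\ref{tb:CharacterTabReeGp}) that $\xi_4$ vanishes on every class in $U$ and on the order-$6$ classes $JT, JT^{-1}$, so $\xi_4(x)\xi_4(y)=0$ whenever neither $x$ nor $y$ lies in $\C(J)$; when exactly one of $x,y$ is $J$, the value $\xi_4(J)=q$ supplies the missing $3$-part against the other (vanishing) factor. With $\xi_3$ and $\xi_4$ both controlled, every term of Burnside's formula is a $3$-adic integer, the outer factor yields $v_3(a(x,y,z))\geq 1$, and the lemma follows.
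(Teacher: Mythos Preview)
Your overall plan is sound and departs from the paper's in an interesting way. The paper, after assuming $y\notin\C(J)$, observes that the outer $3$-part is at least $3^{2k-1}$; for $k\geq 2$ this strictly dominates $|\theta(1)|_3\leq 3^k$ for every $\theta\neq\xi_3,\xi_4$, while $\xi_3(y)=\xi_4(y)=0$ on $\{Y,YT,YT^{-1},JT,JT^{-1}\}$. For $k=1$ with both $x,y\in\{Y,YT,YT^{-1}\}$ the paper abandons the estimate and verifies $3\mid a(x,y,z)$ directly in GAP. You instead aim for a uniform argument, invoking Proposition~\ref{prop:a(xyz)generic} to pass to $N_G(P)$ when $x,y,z\in P$ and using Burnside in $G$ otherwise.

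There is, however, a gap in your sub-case~(i) when $z\notin P$ (so $z\in\mathcal{S}\cup\{J,JT,JT^{-1}\}$). There the outer $3$-part is exactly $3^{2k-1}$, and saying it ``easily absorbs'' the $3^k$ in the degree of $\xi_5,\ldots,\xi_{10}$ gives only $v_3\geq k-1$ for the corresponding summand, which is $0$ at $k=1$; your concluding line that the outer factor alone forces $v_3(a(x,y,z))\geq 1$ then does not follow. The fix is short: by Table~\ref{tb:CharacterTabReeGp} each of $\xi_5,\ldots,\xi_{10}$ takes a value divisible by $m$ on $Y,YT,YT^{-1}$ (the entries $\pm m,\alpha,\bar\alpha,\beta,\bar\beta,\delta,\bar\delta,\epsilon,\bar\epsilon$ all carry an explicit factor $m$), so $\xi_i(x)\xi_i(y)/\xi_i(1)$ already has $v_3\geq k$ and the total contribution has $v_3\geq 3k-1\geq 2$. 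With this amendment your argument goes through for all $k\geq 1$ and has the merit of avoiding the machine check. One smaller point: Lemma~\ref{etaChar} as stated requires one of the three arguments to lie in $\mathcal{S}$, which fails when $z\in\{J,JT,JT^{-1}\}$; the conclusion still holds there because on $U\cup V$ each $\eta$-character takes a constant integer value independent of its index, so the family sums are trivially integers.
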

\begin{proof}
We may assume that $\C(y)\ne \C(J)$.
By Table~\ref{tb:ReeCentOrders}, it follows that $|C_G(x)|_3$ and $|C_G(y)|_3\leq 3q$.
Hence
\[
\left| \frac{q^3(q-1)(q^3+1)}{|C_G(x)||C_G(y)|}\right| _3\geq \frac{q}{9}=3^{2k-1}.
\]

We assume first that $k\geq 2$.
In this case $3^{2k-1}>3^k$, and thus $|\theta(1)|_3<\frac{q}{9}$ for $\theta\in {\rm Irr}(G)$, unless $\theta=\xi_3$ or $\xi_4$.
In particular, the corresponding terms in $a(x,y,z)$ for $\theta\ne \xi_3,\xi_4$ are equal to zero modulo $J(\mathcal{O})G$.
Furthermore as $\C(y)\in \{\C(Y), \C(YT), \C(YT^{-1}), \C(JT), \C(JT^{-1})\} $, it can be seen that $\xi_3(y)=\xi_4(y)=0$.
Therefore we conclude that $a(x,y,z)\equiv 0$ modulo $J(\mathcal{O})G$.

Now consider the case that $k=1$.
If at most one of $x$ and $y$ lie in $\C(Y),\C(YT),\C(YT^{-1})$, then 
\[
\left| \frac{q^3(q-1)(q^3+1)}{|C_G(x)||C_G(y)|}\right| _3\geq \frac{q}{3}=3^{2k}.
\]
Hence the same argument for $k\geq 2$ holds.
Thus it remains to assume both $x$ and $y$ lie in $\C(Y),\C(YT)$ and $\C(YT^{-1})$.
As $k=1$, we can explicitly produce the character table of $^2G_2(q)$ in GAP \cite{GAP} and compute for each such $x$ and $y$ that $a(x,y,z)\equiv 0$ modulo $J(\mathcal{O})G$.
\end{proof}

For $y\in \C(Y),\C(YT),\C(YT^{-1}),\C(JT)$ or $\C(JT^{-1})$, the only remaining constants to evaluate are $a(T,y,z),a(T^{-1},y,z)$ and $a(X,y,z)$.
Lemma \ref{lm:inverseclass} reduces the number of computations required.
We state the following values without detail, these were evaluated by computing the full summand in Burnside's formula and are all given modulo $J(\mathcal{O})G$.
Note that the computations make use of Table~\ref{tb:UsefulOrthRel}, in the cases where $z\in\mathcal{S}$.
The full values can be found in \cite{PhDSchwabrow}.

\[\begin{array}{rclcl}
\Chat(T)\cdot\Chat(YT^{-1})&=& 0& = &\Chat(T^{-1})\cdot\Chat(YT)  \\
 \Chat(T)\cdot\Chat(YT)&=& 0 &=& \Chat(T^{-1})\cdot\Chat(YT^{-1})\\
\Chat(T)\cdot\Chat(JT^{-1})&=& 0         &=&  \Chat(T^{-1})\cdot\Chat(JT)   \\
\Chat(T)\cdot\Chat(JT)&=&0&=& \Chat(T^{-1})\cdot\Chat(JT^{-1}) \\
\Chat(Y)\cdot \Chat(T)&=& 0&=& \Chat(Y)\cdot \Chat(T^{-1})\\
\Chat(Y)\cdot \Chat(X)&=& 2\cdot\Chat(Y)\\
\end{array}\]

For $y\in \C(YT),\C(YT^{-1}),\C(JT)$ or $\C(JT^{-1})$ it remains to compute $a(X,y,z)$.

\begin{lemma}
Let $x\in\C(X)$   and  $y\in\C(Y),\C(YT)$,  $\C(YT^{-1})$,  $\C(JT)$ or $\C(JT^{-1})$. Then $\Chat(x)\cdot \Chat(y)= 2\cdot \Chat(y)$.
\end{lemma}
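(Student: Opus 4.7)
The plan is to apply Proposition~\ref{th:sumStrucConst}, which reduces the computation of $a(X,y,z)$ to knowledge of the other structure constants $a(x,y,z)$ for $x \neq X$. The point is that every such $a(x,y,z)$ has already been shown to vanish modulo $3$ in the preceding lemmas and displayed calculations, so only the trivial class and $\C(X)$ contribute. Note the case $y \in \C(Y)$ is already covered: the displayed identity $\Chat(Y)\cdot\Chat(X) = 2\cdot\Chat(Y)$ together with commutativity of class sums yields the claim at once.

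For the remaining cases $y \in \C(YT),\C(YT^{-1}),\C(JT)$ or $\C(JT^{-1})$, Table~\ref{tb:ReeCentOrders} shows that the $3$-part of $|C_G(y)|$ is at most $3q$, so $|\C(y)| = |G|/|C_G(y)|$ is divisible by $q^2/3 = 3^{4k+1}$ and in particular $|\C(y)| \equiv 0 \pmod{3}$. Splitting the sum in Proposition~\ref{th:sumStrucConst} over the representatives $\mathscr{P}$ gives
\[
|\C(y)| \;=\; a(1_G, y, z) \;+\; a(X, y, z) \;+\!\sum_{\substack{x\in\mathscr{P}\\ x\neq 1_G, X}} a(x,y,z).
\]
I would then invoke the preceding results to show every term in the final sum is $\equiv 0 \pmod{3}$. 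Namely, for $x\in\mathcal{S}$ the first lemma of this subsection together with the symmetry $a(x,y,z)=a(y,x,z)$ gives vanishing, since $y \notin \mathcal{S}\cup\{\C(J)\}$; for $x\in\{\C(Y),\C(YT),\C(YT^{-1}),\C(JT),\C(JT^{-1}),\C(J)\}$, the immediately preceding lemma applies (noting that not both of $x$ and $y$ equal $\C(J)$); and for $x\in\{\C(T),\C(T^{-1})\}$ the displayed equations above already give $a(x,y,z) \equiv 0 \pmod{3}$.

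Since $a(1_G, y, z) = \delta_{y,z}$, the reduction of the above equation modulo $3$ becomes $a(X,y,z) \equiv -\delta_{y,z} \equiv 2\delta_{y,z} \pmod{3}$, which is exactly the statement $\Chat(X)\cdot\Chat(y) \equiv 2\cdot\Chat(y)$ modulo $J(\mathcal{O})G$. The only real work is bookkeeping: one must verify that the list of classes enumerated in Table~\ref{tb:ReeCentOrders} genuinely exhausts $\mathscr{P}$ and that each of the families $R^a, S^a, V_i, W_i, JR^a, JS^a$ is subsumed under the first lemma via the symmetry argument. Once this is confirmed, the conclusion follows mechanically, with no further invocation of Burnside's formula.
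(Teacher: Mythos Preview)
Your proposal is correct and follows essentially the same approach as the paper: both apply Proposition~\ref{th:sumStrucConst} to reduce $a(X,y,z)$ to the already-computed constants $a(x,y,z)$ for $x\neq X$, all of which vanish modulo $3$ except $a(1_G,y,z)=\delta_{y,z}$. Your version is simply more explicit in enumerating which preceding result handles each family of classes $x$, whereas the paper's proof states this in one line.
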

\begin{proof}
Fix $y\in \C(YT)$; the remaining three cases are proved in the same way.

By Proposition \ref{th:sumStrucConst}, $\sum_{x} a(x,YT,z)= |\C(YT)|= |G|/(3q)\equiv 0 \; {\rm mod}\;3$. Hence
\[ \left(\sum_{x\neq X} a(x,YT,z)\right) + a(X,YT,z)\equiv 0 \;{\rm mod}\; J(\mathcal{O})G\;\; \text{ for all }z\in G. \]
Now $a(x\neq X,YT,z)\equiv 0 $ except $a(1_G,YT,z)$. 
However, $a(1_G,YT,z)=1$ if $z=YT$ and zero otherwise.
Hence
\[a(X,YT,z) = \left\{
                \begin{array}{ll}
                  0, & \hbox{if $z \not\in \C(YT)$;} \\
                  2, & \hbox{if $z \in \C(YT)$.}
                \end{array}
              \right.
\]
\end{proof}

We now consider the case that both $x$ and $y$ lie in $\C(J)$.

\begin{lemma}
Let $x$ and $y$ both lie in $\C(J)$.
Then $\Chat(x)\cdot \Chat(y)= \xi_3$.
\end{lemma}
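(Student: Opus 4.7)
The plan is to prove $\Chat(J)^2 \equiv \hat{e}_{\xi_3} \pmod{J(\mathcal{O})G}$, so that $\Chat(J)^2 = e_{\xi_3}$ in $kG$, by splitting the computation along the block decomposition $1 = \hat{e}_0 + \hat{e}_{\xi_3}$ and using Corollary~\ref{th:defectConClass} to cut down the set of relevant target classes on the principal block.

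First I would handle the defect-zero block $\mathcal{O}G\hat{e}_{\xi_3}$, which is one-dimensional over $\mathcal{O}$, so $\Chat(J)^2$ acts on it by the square of the central character $\omega_{\xi_3}$. Using $\xi_3(J) = q$, $\xi_3(1) = q^3$ and $|\C(J)| = q^2(q^2-q+1)$,
\[
\omega_{\xi_3}(\Chat(J)) \;=\; \frac{|\C(J)|\,\xi_3(J)}{\xi_3(1)} \;=\; \frac{q^2(q^2-q+1)\cdot q}{q^3} \;=\; q^2-q+1 \;\equiv\; 1 \pmod{3}.
\]
Hence $\Chat(J)^2\,\hat{e}_{\xi_3} = (q^2-q+1)^2\,\hat{e}_{\xi_3} \equiv \hat{e}_{\xi_3}$ in $kG$, matching the target on that block.

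Next, on the principal block it suffices to verify $a(J,J,z) \equiv c_z \pmod 3$ for every conjugacy class $\C(z)$, where $c_z$ denotes the coefficient of $\Chat(z)$ in the reduction of $\hat{e}_{\xi_3}$ displayed just before Subsection~\ref{Sec:MainThRee} (so $c_z = \pm 1$ on classes in $\mathcal{S}$ and $c_z = 0$ otherwise). Since $d_J = v_3(|C_G(J)|) = v_3(q(q-1)(q+1)) = 2k+1$, Corollary~\ref{th:defectConClass} gives $a(J,J,z) \equiv 0 \pmod 3$ whenever $d_z > 2k+1$, and from Table~\ref{tb:ReeCentOrders} the only surviving targets are those in $\mathcal{S} \cup \{\C(J),\C(JT),\C(JT^{-1})\}$; on the three extra classes $c_z$ also vanishes, so the two supports coincide. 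For each of these seven cases I would evaluate
\[
a(J,J,z) \;=\; \frac{|G|}{|C_G(J)|^2}\sum_{\theta \in \text{Irr}(G)} \frac{\theta(J)^2\,\theta(z^{-1})}{\theta(1)}
\]
by first collapsing the six parametric families $\eta_r,\eta_r',\eta_t,\eta_t',\eta_l^-,\eta_l^+$ into integer sums via Lemma~\ref{etaChar} and the column-orthogonality identities of Table~\ref{tb:UsefulOrthRel} (so they may be reduced modulo $3$), and reading off the ten exceptional characters $\xi_1,\ldots,\xi_{10}$ individually from Table~\ref{tb:CharacterTabReeGp}.

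The main obstacle will be the three classes $z \in \{\C(J),\C(JT),\C(JT^{-1})\}$. Here $d_z = d_J = 2k+1$, so the Burnside prefactor $|G|/|C_G(J)|^2 = q(q^2-q+1)/(q^2-1)$ has only the minimal admissible $3$-valuation $2k+1$, and one can no longer dispatch individual characters by a crude 3-adic estimate as in the earlier lemmas of this subsection. The required congruence $a(J,J,z) \equiv 0 \pmod 3$ will have to come from cancellations among all characters nonzero on $\C(J)$, exploiting the real/complex-conjugate symmetries of the character table together with the explicit values of $\xi_3,\xi_4$ and of the four characters $\xi_5,\xi_6,\xi_7,\xi_8$ of degree $(q-1)m(q+1\pm 3m)/2$. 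For $z \in \mathcal{S}$, by contrast, $|C_G(z)|$ is coprime to $3$ so the prefactor carries a surplus of $3$'s; only the high-$3$-divisibility characters $\xi_3$ and $\xi_4$ can contribute, and the $\xi_3$ term alone is expected to produce $c_z$ by the same mechanism as in the first lemma of this subsection.
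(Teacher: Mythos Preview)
Your plan would lead to a correct proof, but you have manufactured an obstacle that is not there, and the paper's argument is much shorter.

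The Burnside prefactor $|G|/|C_G(J)|^2 = q(q^2-q+1)/(q^2-1)$ does \emph{not} depend on $z$, so there is no distinction between the cases $z\in\mathcal S$ and $z\in\{\C(J),\C(JT),\C(JT^{-1})\}$ at the level of the $3$-adic estimate. For \emph{every} $\theta\in\mathrm{Irr}(G)\setminus\{\xi_3,\xi_4\}$ one has $|\theta(1)|_3\le m=3^k<q$, and since $\theta(J)^2\theta(z^{-1})\in\mathcal O$, the corresponding summand
\[
\frac{|G|}{|C_G(J)|^2}\cdot\frac{\theta(J)^2\theta(z^{-1})}{\theta(1)}
\]
lies in $3\mathcal O\subset J(\mathcal O)$ for \emph{all} $z$. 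Thus only $\xi_3$ and $\xi_4$ survive, uniformly in $z$. The $\xi_4$ term then dies too, because $\xi_4(J)=-q$ contributes an extra factor of $q^2$ in the numerator against a single $q$ in $\xi_4(1)=q(q^2-q+1)$; explicitly its contribution is $q^2\xi_4(z^{-1})/(q^2-1)\equiv 0$. What remains is
\[
a(J,J,z)\equiv \frac{q^2-q+1}{q^2-1}\,\xi_3(z^{-1})\equiv -\xi_3(z^{-1})\pmod{J(\mathcal O)},
\]
and Table~\ref{tb:ValuesOnS} converts this into the claimed $e_{\xi_3}$. No separate treatment of $z\in\{\C(J),\C(JT),\C(JT^{-1})\}$, no appeal to Lemma~\ref{etaChar} or Table~\ref{tb:UsefulOrthRel}, and no delicate cancellations among $\xi_5,\dots,\xi_8$ are needed.

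Your preliminary block decomposition via the central character $\omega_{\xi_3}$ is correct but buys nothing: once you commit to computing $a(J,J,z)$ for all $z$ anyway, the identity $\Chat(J)^2=e_{\xi_3}$ in $kG$ follows directly, and the relation $\Chat(J)^2 e_{\xi_3}=e_{\xi_3}$ is then a consequence rather than a separate ingredient. Likewise, invoking Corollary~\ref{th:defectConClass} to prune the target classes is harmless but redundant, since the single estimate above already forces $a(J,J,z)\equiv -\xi_3(z^{-1})$, which vanishes on every class outside $\mathcal S$.
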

\begin{proof}

In this case $|C_G(x)|_3=|C_G(y)|_3=q$.
Thus
\[
\left| \frac{q^3(q-1)(q^3+1)}{|C_G(x)||C_G(y)|}\right| _3=q.
\]
Hence for any $\chi\in {\rm Irr}(G)$ such that $|\chi(1)|_3<q$, the corresponding term in $a(x,y,z)$ reduces to zero modulo $J(\mathcal{O})G$.
Hence the only terms remaining in $a(x,y,z)$ modulo $J(\mathcal{O})G$ are from $\xi_3$ and $\xi_4$.
Using this we see that
\[
\begin{array}{rl}
a(J,J,z)= & q\cdot\frac{q^2-q+1}{(q-1)(q+1)}\left(  \frac{\xi_3(J)\xi_3(J)\xi_3(z^{-1})}{q^3} +  \frac{\xi_4(J)\xi_4(J)\xi_4(z^{-1})}{q(q^2-q+1)}\right)\\[10pt]
=& q\cdot\frac{q^2-q+1}{(q-1)(q+1)}\left(\frac{\xi_3(z^{-1})}{q}+q\cdot\frac{\xi_4(z^{-1})}{q^2-q+1}\right)\\[10pt]
\equiv & -\xi_3(z^{-1})\\[10pt]
\end{array}.
\]
By Table~\ref{tb:ValuesOnS}, we have that $\Chat(J)\Chat(J)=e_{\xi_3}$.
\end{proof}

 Thus for $y\in \C(J)$ it remains to consider $a(T,y,z), a(T^{-1},y,z)$ and $a(X,y,z)$.
In the following Lemma we deal with $T$ and $T^{-1}$.

\begin{lemma}\label{lm:JTcalculation}
Let $y\in \C(J)$ and $x\in \C(T)$ or $\C(T^{-1})$.  Then $\Chat(x)\cdot \Chat(y) =0$.
\end{lemma}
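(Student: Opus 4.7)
The proof strategy is a direct application of Burnside's formula, with the required $3$-divisibility extracted from the character sum by a case-by-case analysis of the irreducible characters of $G$. Computing the prefactor gives
\[
\frac{|G|}{|C_G(T)||C_G(J)|}=\frac{q^3(q^3+1)(q-1)}{2q^2\cdot q(q^2-1)}=\frac{q^2-q+1}{2},
\]
whose $3$-adic valuation is zero. In contrast to the preceding lemmas in this section, no character is eliminated automatically by the prefactor, so the congruence $a(T,J,z)\equiv 0\pmod 3$ must come from the character sum itself. As a first reduction, by Corollary~\ref{th:defectConClass} only $z$ with $d_z\le\min\{d_T,d_J\}=2k+1$ need be considered, restricting $z$ to the classes in $\mathcal{S}\cup\{\C(J),\C(JT),\C(JT^{-1})\}$.

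Next I would analyse the characters of Table~\ref{tb:CharacterTabReeGp} according to the $3$-adic valuation of their degrees. The Steinberg character $\xi_3$ contributes zero since $T$ is $3$-singular and hence $\xi_3(T)=0$. The characters $\xi_1,\xi_2$ together with the six families $\eta_r,\eta_r',\eta_t,\eta_t',\eta_l^-,\eta_l^+$ all satisfy $v_3(\theta(1))=0$, so their terms are already $3$-adic integers; when $z\in\mathcal{S}$ their contributions can be collapsed using the column orthogonality identities of Table~\ref{tb:UsefulOrthRel}, in the spirit of Lemma~\ref{etaChar}, while for $z\in\{\C(J),\C(JT),\C(JT^{-1})\}$ they are evaluated directly from the table. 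The characters $\xi_5,\ldots,\xi_{10}$ have $v_3(\theta(1))=k$ and occur in complex-conjugate pairs, so grouping them pairwise clears the $3^k$ in the denominator and yields a rational integer whose residue modulo $3$ can be read off. Finally, $\xi_4$ has $v_3(\xi_4(1))=2k+1$, but the value $\xi_4(T)$ from the table carries a compensating $3$-part, so its term is again a $3$-adic integer.

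Assembling all contributions and reducing modulo $3$ should give $a(T,J,z)\equiv 0$ for every admissible $z$, hence $\Chat(T)\Chat(J)=0$ in $kG$. The case $x\in\C(T^{-1})$ then follows from $a(T^{-1},J,z)=a(T,J,z^{-1})$ by Lemma~\ref{lm:inverseclass}, together with the observation that running $z$ over admissible classes is preserved by inversion. The main obstacle is purely computational: the values of $\xi_5,\ldots,\xi_{10}$ on $T,T^{-1}$ and on the classes $\C(JT^{\pm 1})$ involve $\sqrt{-3}$, so the bookkeeping required to verify that the irrational parts cancel and that the resulting integer is divisible by $3$ is delicate, although conceptually no different from the computations the authors suppress elsewhere in this section.
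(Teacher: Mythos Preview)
Your approach is essentially the same as the paper's: both reduce by Corollary~\ref{th:defectConClass} and then evaluate Burnside's formula directly on the remaining classes. The one substantive difference is that the paper invokes an external computation, \cite[Lemma~4.1]{Jones}, to dispose of the classes $z\in\{R^a,S^a,V_i,W_i\}$ in one stroke, leaving only $z\in\{JT,JT^{-1},JR^a,JS^a\}$ for which the explicit values
\[
a(T,J,JT)=9m^4+3m^2,\quad a(T,J,JT^{-1})=0,\quad a(T,J,JR^a)=\tfrac{9}{2}m^4-\tfrac{3}{2}m^2,\quad a(T,J,JS^a)=\tfrac{9}{2}m^4+\tfrac{3}{2}m^2
\]
are written out. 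Your route avoids the external reference at the cost of carrying all of $\mathcal{S}$ through the character sum, which is more self-contained but more laborious; the paper's route is shorter but depends on Jones. Your enumeration of the surviving $z$ is in fact slightly more careful than the paper's, since you correctly include $\C(J)$ (which also has defect $2k+1$) among the cases requiring direct verification.

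One small correction: your treatment of $\xi_4$ is imprecise. From Table~\ref{tb:CharacterTabReeGp} one has $\xi_4(T)=0$, so the $\xi_4$-term vanishes outright rather than being rescued by a ``compensating $3$-part'' in $\xi_4(T)$. This does not affect the argument, but the stated reason is wrong.
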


\begin{proof}
By applying Theorem \ref{th:defectConClass},
\[ a(x,y,z)\equiv 0 \;{\rm mod}\; J(\mathcal{O})G \text{ if } \;\; z\in \{ X,Y, T, T^{-1}, YT, YT^{-1}\};\]
while by  \cite[Lemma 4.1]{Jones},
\[ a(x,y,z)\equiv 0 \;{\rm mod}\; J(\mathcal{O})G \text{ if } \;\; z\in \{ R^a,S^a,V_i,W_i\}.\]
Note that the formula used in the calculations by Jones \cite{Jones}, and hence the structure constants calculated, differs up to a scalar; we can adjust appropriately by dividing by $|\C(z)|$.

\vspace{4mm}\noindent
This leaves $a(x,y,z)$ for $z\in JT, JT^{-1}, JR^a, JS^a$ which can be calculated directly from the character table. In particular
\[ \begin{array}{ccl}
a(T,J, JT)&=& 9\cdot m^4+3\cdot m^2;\\
a(T,J, JT^{-1})& =& 0;\\
a(T,J, JR^a)& =&\frac{9}{2} m^4-\frac{3}{2}m^2;\\
a(T,J, JS^a)& =&\frac{9}{2}  m^4+\frac{3}{2} m^2,\\
\end{array}\]
where $m=3^k$. 
The equality  $a(T^{-1},J,z)= a(T,J, z^{-1})$ proved in Lemma \ref{lm:inverseclass} then completes this proof.
\end{proof}

Now we only have to compute $a(X,J,z)$. 

\begin{lemma}
Let $y \in \C(J)$. Then $\Chat(X)\cdot \Chat(J)= e_{\xi_3}-\Chat(J)$.
\end{lemma}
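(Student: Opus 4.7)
The strategy mirrors the earlier $\mathcal{S}$-case: every coefficient $a(x,J,z)$ with $x\ne X$ has already been pinned down in the preceding lemmas, so applying Proposition~\ref{th:sumStrucConst} to the column indexed by $y=J$ will recover the missing $a(X,J,z)$ modulo $J(\mathcal{O})G$.

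First I would note that $|\C(J)|=|G|/|C_G(J)|=q^2(q^2-q+1)$, hence $|\C(J)|\equiv 0\pmod 3$, so Proposition~\ref{th:sumStrucConst} yields
\[
\sum_{x\in\mathscr{P}}a(x,J,z)\equiv 0\pmod{J(\mathcal{O})G}
\]
for every $z\in G$. Write $c_z$ for the coefficient of $\Chat(z)$ in $e_{\xi_3}$, read off from the explicit formula for the block idempotent displayed at the start of this section (so $c_z=\pm 1$ when $z\in\mathcal{S}$ and $c_z\equiv 0$ otherwise). The contributions $a(x,J,z)$ for $x\ne X$ are already known: $a(1_G,J,z)=\delta_{z,J}$; the lemma giving $\Chat(x)\Chat(J)=e_{\xi_3}$ for $x\in\mathcal{S}$ says each of the $q-2$ such classes contributes $c_z$; the two intermediate lemmas force $a(x,J,z)\equiv 0$ for $x\in\{T,T^{-1},Y,YT,YT^{-1},JT,JT^{-1}\}$; and the lemma $\Chat(J)\Chat(J)=e_{\xi_3}$ gives $a(J,J,z)\equiv c_z$.

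Summing everything except the $x=X$ term and using $q\equiv 0\pmod 3$ yields
\[
\delta_{z,J}+(q-1)c_z\equiv\delta_{z,J}-c_z\pmod{3},
\]
so $a(X,J,z)\equiv c_z-\delta_{z,J}$ modulo $J(\mathcal{O})G$. Reassembling the class sums over all $z$ then gives
\[
\Chat(X)\Chat(J)\equiv\sum_z c_z\,\Chat(z)-\Chat(J)=e_{\xi_3}-\Chat(J),
\]
which is the stated identity. There is no serious obstacle: the argument is pure bookkeeping, and the only point requiring care is to recall that the $x=J$ contribution (via $\Chat(J)^2=e_{\xi_3}$) must be added to the $q-2$ contributions coming from $x\in\mathcal{S}$, producing a total of $(q-1)c_z$ rather than $(q-2)c_z$; it is precisely the sign flip $q-1\equiv -1\pmod 3$ that converts this into the $-\Chat(J)$ appearing in the final answer.
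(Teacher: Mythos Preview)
Your proposal is correct and follows essentially the same approach as the paper: both use Proposition~\ref{th:sumStrucConst} to solve for $a(X,J,z)$ after collecting the already-computed contributions from $x=1_G$, $x\in\mathcal{S}$, $x=J$, and the remaining classes where $a(x,J,z)\equiv 0$. Your packaging via the uniform coefficient $c_z$ is slightly more compact than the paper's explicit case split on $z\in\C(J)$, $z\in\{V_i,W_i,S^a,JS^a\}$, $z\in\{R^a,JR^a\}$, and $z\notin\mathcal{S}\cup\C(J)$, but the underlying computation is identical.
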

\begin{proof}
By Proposition \ref{th:sumStrucConst}, $\sum_{x} a(x,J,z)= |\C(J)|= |G|/((q+1)q(q-1))\equiv 0 \; {\rm mod}\;3$. Hence
\[\sum_{x} a(x,J,z) = a(1_G,J,z)+a(J,J,z)+(q-2)a(x\in \mathcal{S},J,z)+a(X,J,z) \equiv 0\]
\[= \left\{
      \begin{array}{ll}
        1+0+(q-2)(0)+ a(X,J,z)\equiv 1+ a(X,J,z), & \hbox{ if $z\in J$;} \\
        0+1+(q-2)(+1) + a(X,J,z)\equiv -1+ a(X,J,z), & \hbox{ if $z \in V_i,W_i,S^a,JS^a$;} \\
        0 -1 +(q-2)(-1)+a(X,J,z)\equiv 1+ a(X,J,z) , & \hbox{ if $z\in R^a, JR^a$;} \\
        a(X,J,z), & \hbox{ if $z\not\in \mathcal{S}$, $\C(J)$.}
      \end{array}
    \right.
\]
\end{proof}

For the remaining structure constants, when $\C(x),\C(y)\in \{\C(T),\C(T^{-1}),\C(X)\}$, direct calculations from the character table are used.
As the full Burnside formula was computed, we only provide the final values modulo $ J(\mathcal{O})G$. 
As before, the computations make use of Table~\ref{tb:UsefulOrthRel}, in the cases where $z\in\mathcal{S}$.
The full values can be found in \cite{PhDSchwabrow}.

\[\begin{array}{lcl}
\Chat(X)\cdot\Chat(X)&=& 2+\Chat(X)+\sum \Chat(R^a)+\sum\Chat(S^a)+\sum\Chat(V_i)+\sum\Chat(W)\\
\Chat(T)\cdot \Chat(T)&=& 2\cdot\sum_a \Chat(R^a)+ \sum_a \Chat(JR^a) +2\cdot\sum_a \Chat(JS^a)\\
&=&\Chat(T^{-1})\cdot \Chat(T^{-1})\\
&=&\Chat(T)\cdot\Chat(T^{-1})\\
\Chat(X)\cdot\Chat(T)&=& 2\Chat(T)+2\sum \Chat(R^a) +\sum \Chat(JR^a)+2\sum \Chat(JS^a)\\
\Chat(X)\cdot\Chat(T^{-1})&=&2\Chat(T^{-1})+2\sum \Chat(R^a) +\sum \Chat(JR^a)+2\sum \Chat(JS^a)\\
\end{array}\]

\subsubsection{\underline{Summary of multiplications}}
\begin{table}[h]
\centering
\caption{Summary of multiplications of two conjugacy class sums in $kG$}\label{tb:SummaryClassMultRee}
\[\footnotesize
\begin{array}{l|cccccccccccccccc}
	&R^a & S^a&	\;V_i\;&\; W_i\; &\;X \;&\; Y\;&\;  T\;&   T^{-1} &YT&YT^{-1}& JT&JT^{-1} & JR^a &JS^a &J  \\  \hline
R^a& e_{\xi_3} &e_{\xi_3}&e_{\xi_3}&e_{\xi_3}&\gamma_1&-&-&-&-&-&-&-&e_{\xi_3}&e_{\xi_3}& e_{\xi_3}\\[7pt]
S^a	&    &e_{\xi_3}&e_{\xi_3}&e_{\xi_3}&\gamma_2&-&-&-&-&-&-&-&e_{\xi_3}&e_{\xi_3}&e_{\xi_3} \\[7pt]
V_i	&&&e_{\xi_3}  &e_{\xi_3}&\gamma_3&-&-&-&-&-&-&-&e_{\xi_3}&e_{\xi_3}&e_{\xi_3}\\[7pt]
W_i	&&&  &e_{\xi_3}&\gamma_4&-&-&-&-&-&-&-&e_{\xi_3}&e_{\xi_3}&e_{\xi_3}	\\[7pt]
X	&&&&&\alpha&\delta_1&\mu&\nu&\delta_2&\delta_3&\delta_4&\delta_5&\gamma_5&\gamma_6&\gamma_7	\\[7pt]
Y	&&&&&			&-&- &- &- &- &- &-&-&- &-\\[7pt]
T	 &	&  & 	 &	&	&		&\beta&\beta&-&-&- &-&-&- &-\\[7pt]
T^{-1}			&&&&&&&&\beta&-&-&-&- &- &-& -\\[7pt]
YT	 &	&  & 	 &	&	&		&	&    & -&-&-&-&-&-&-\\[7pt]
YT^{-1}		 &	&  & 	 &	&	&	&	&    & &-&-&-&-&-&-\\[7pt]
JT   &	&  & 	 &	&	&		&	&    &   &&-&-&-&-&-\\[7pt]
JT^{-1}	 &	&  & 	 &	&	&		&	&    &  & &   &-&-&-&-\\[7pt]
JR^a   &	&  & 	 &	&	&		&	&    &  & &   &  & e_{\xi_3}&e_{\xi_3}&e_{\xi_3} \\[7pt]		
JS^a   &	&  & 	 &	&	&		&	&    &  & &   &  &&e_{\xi_3}& e_{\xi_3}\\[7pt]	
J   &	&  & 	 &	&	&		&	&    &  & &   &  & &&e_{\xi_3}\\[7pt]				
\end{array}\normalsize
\]\end{table}
In Table~\ref{tb:SummaryClassMultRee} we use `` $-$ '' to denote a zero in $kG$ and
\[ \begin{array}{ccl}
\alpha&=& 2+\Chat(X)+\sum \Chat(R^a)+\sum \Chat(S^a) +\sum \Chat(V_i)+\sum \Chat(W_i)\\
\beta&= & 2\sum \Chat(R^a) +\sum \Chat(JR^a)+2\sum \Chat(JS^a) \\
\end{array}
\]
\[ \begin{array}{cclccl}
\gamma_1&=&e_{\xi_3}-\Chat(R^a)&\delta_1& = & 2\cdot\Chat(Y) \\
\gamma_2&=&e_{\xi_3}-\Chat(S^a)&\delta_2& = & 2\cdot\Chat(YT) \\
\gamma_3&=&e_{\xi_3}-\Chat(V_i)&\delta_3& = & 2\cdot\Chat(YT^{-1}) \\
\gamma_4&=&e_{\xi_3}-\Chat(W_i)&\delta_4& = & 2\cdot\Chat(JT) \\
\gamma_5&=&e_{\xi_3}-\Chat(JR^a)& \delta_5& = & 2\cdot\Chat(JT^{-1}) \\
\gamma_6&=&e_{\xi_3}-\Chat(JS^a)& \mu &=& 2\Chat(T)+2\sum \Chat(R^a) +\sum \Chat(JR^a)+2\sum \Chat(JS^a)\\
\gamma_7&=&e_{\xi_3}-\Chat(J)& \nu &=& 2\Chat(T^{-1})+2\sum \Chat(R^a) +\sum \Chat(JR^a)+2\sum \Chat(JS^a)\\
\end{array}
\]
Note that $\Chat(X)\Chat(y)=\gamma_i= e_{\xi_3}-\Chat(y)$ so that $(1+\Chat(X))\Chat(y)= e_{\xi_3}$. Moreover, $\C(X)\cdot \C(Y)=\delta_i=2\cdot \Chat(y)$ so that $(1+\Chat(X))\Chat(y)=0$.

Since $e_{\xi_3}\cdot e_0=0$, we can see from Table \ref{tb:SummaryClassMultRee} that most pairs of elements in  $\mathfrak{D}_G$ multiply to zero. However there exist elements $b, b'$ in $\mathfrak{D}_G$ such that $b\cdot b'\neq 0$:
\[\begin{array}{ccl}
(1+\Chat(X))^2e_0&=& \left(\sum \Chat(R^a)+\sum \Chat(S^a) +\sum \Chat(V_i)+\sum \Chat(W_i)\right)e_{0}\\[5pt]
&   =&2\sum\Chat(R^a)+\sum \Chat(JR^a)+2\sum\Chat(JS^a)\\
\end{array}\]

\[\begin{array}{ccl}
(1+\Chat(X))\Chat(T)e_0&=&  (1+\Chat(X))\Chat(T^{-1})e_0   =  (\Chat(T^{\pm 1}))^2e_0  = \Chat(T)\cdot \Chat(T^{-1})e_0    \\[5pt]
&=& 2\sum \Chat(R^a) +\sum \Chat(JR^a)+2\sum \Chat(JS^a) \\[5pt]
\end{array}\]

\subsection{The proof of Theorem~\ref{th:MainReeLL3}}
As all the products $\Chat(x)\Chat(y)$ have been computed, we can complete the proof of Theorem~\ref{th:MainReeLL3}.

{\bf Proof of Theorem~\ref{th:MainReeLL3}}.
By Table \ref{tb:SummaryClassMultRee} and the discussion below it, there exist elements $b,b'$ in $\mathfrak{D}_G$ such that $b\cdot b'\neq 0$. Hence $LL(Z(kGe_0))\geq 3$.

Note that the number of the conjugacy classes labeled by $\C(S^a)$, for some $a$, is the only one not congruent to zero modulo $3$; in fact, we  have $(q-3)/24\equiv 1$ modulo $3$ of those (see Table \ref{tb:ReeCentOrders}). This explains why $(1+\Chat(X))^2e_0 \neq (1+\Chat(X))^2$ while $(1+\Chat(X))\Chat(T^{\pm1})e_0=  (1+\Chat(X))\Chat(T^{\pm 1})$, and  $(\Chat(T^{\pm 1}))^2e_0= (\Chat(T^{\pm 1}))^2$. In particular,
\[ \left(\sum\limits_{a=1}^{(q-3)/24} \Chat(S^a)   \right)   \cdot  e_{\xi_3}\equiv \left(\sum\limits_{a=1}^{(q-3)/24} \Chat(S^a) \right) \cdot \left(\sum\limits_{a=1}^{(q-3)/24} \Chat(S^a) \right) \equiv e_{\xi_3} \neq 0.\]

From the multiplications already computed, it can be concluded that the Loewy length must be equal to $3$, since
none of the outcomes of the non-zero multiplications of two elements in $\mathfrak{D}_G$ involve the conjugacy classes $\C(X), \C(T), \C(T^{-1})$ or $\C(S^a)$.  It therefore follows that any triple of elements in $\mathfrak{D}_G$ will multiply to zero. Hence $LL(Z(kGe_0))=3$.
\qed

\section{The $3$-block of the Sylow normaliser}
We now state the main theorem on the normaliser of a Sylow $3$-subgroup. 
Recall that the group algebra $kN_G(P)$ is indecomposable, where $k$ is an algebraically closed field of characteristic $3$.
Throughout we shall assume that $q=3^{2k+1}$ with $k>0$.

\begin{theorem}\label{th:Snowman}
Let $N=N_G(P)$ where $G=$ $^2G_2(q)$, $q=3^{2k+1}\geq 27$, and $P\in {\rm Syl}_3(G)$. Then $LL(Z(kN))=2$.
\end{theorem}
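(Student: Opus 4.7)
The plan is to identify $J(Z(kN))$ explicitly and then to show that every element in a natural basis is divisible by the single central element $\widehat{Z(P)}:=\sum_{z\in Z(P)} z$, which itself squares to zero in characteristic $3$. Since $O_{3'}(N)=1$, the group algebra $kN$ has a unique $3$-block, so $Z(kN)$ is a local commutative $k$-algebra and $J(Z(kN))$ is the kernel of the augmentation $\varepsilon\colon Z(kN)\to k$, $\varepsilon(\Chat(g))=|\C(g)|\bmod 3$. Reading off Table~\ref{tb:ReeNormConCl}, every non-trivial class has size divisible by $3$ except $\C(X)$, which has size $q-1\equiv 2\pmod 3$, so
\[
\mathfrak{D}_N \;=\; \{\Chat(X)+1\} \,\cup\, \bigl\{\Chat(g)\colon g\in\mathscr{P}\setminus\{1_N\},\ \C(g)\neq\C(X)\bigr\}
\]
is a basis of $J(Z(kN))$. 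This set is non-empty, so $LL(Z(kN))\ge 2$.

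Two facts about $\widehat{Z(P)}$ will do the rest. First, since $Z(P)$ is characteristic in $P\trianglelefteq N$, the element $\widehat{Z(P)}$ is central in $kN$, and a direct count in the elementary abelian group $Z(P)$ gives $\widehat{Z(P)}^{\,2}=|Z(P)|\cdot\widehat{Z(P)}=q\cdot\widehat{Z(P)}=0$ in $kN$. Second, since $\C(X)=Z(P)\setminus\{1_P\}$ we have $\Chat(X)+1=\widehat{Z(P)}$, and I claim that for every other non-trivial class $\C(y)$ the class sum factors as $\Chat(y)=\widehat{Z(P)}\cdot\tilde y$ for some $\tilde y\in kN$. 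To see this I would verify, class by class from Table~\ref{tb:ReeNormConCl}, that $\C(y)$ is closed under left multiplication by $Z(P)$: the multiplication rule in $P$ gives $x(0,0,v')\cdot x(t,u,v)h(w) = x(t,u,v+v')h(w)$, so this action only shifts the third coordinate of a representative, and in every row of Table~\ref{tb:ReeNormConCl} other than $\C(X)$ that third coordinate runs freely through $\mathbb{F}_q$ within the class. Hence $\C(y)$ is a disjoint union of left $Z(P)$-cosets, giving the factorisation.

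Combining these observations, every element of $\mathfrak{D}_N$ lies in $\widehat{Z(P)}\cdot kN$, and centrality of $\widehat{Z(P)}$ gives
\[
b\cdot b' \,\in\, \widehat{Z(P)}^{\,2}\cdot kN \,=\, 0
\]
for all $b,b'\in\mathfrak{D}_N$. Hence $J(Z(kN))^{2}=0$, and together with the lower bound above this yields $LL(Z(kN))=2$. There is no serious obstacle in this argument; the only real labour is the routine $Z(P)$-invariance check from Table~\ref{tb:ReeNormConCl}, making this step significantly cleaner than the case-by-case class-multiplication underlying Theorem~\ref{th:MainReeLL3}.
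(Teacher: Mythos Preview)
Your argument is correct and is a genuinely different, more conceptual route than the paper's. The paper establishes $J(Z(kN))^2=0$ by computing, over a sequence of lemmas, every class–algebra constant $a_N(x,y,z)$ modulo~$3$ via Burnside's formula and the character table of $N_G(P)$ (Table~\ref{tb:CharacterTabReeNorm}), together with Proposition~\ref{prop:a(xyz)generic} to import the values $a_G(T^{\pm1},T^{\pm1},z)$ from $G$. Your proof bypasses all of this: once one checks that each non-trivial class other than $\C(X)$ is a union of left $Z(P)$-cosets, the factorisation $\Chat(y)=\widehat{Z(P)}\cdot\tilde y$ and the identity $\widehat{Z(P)}^{\,2}=q\,\widehat{Z(P)}=0$ kill $J(Z(kN))^2$ in one line, with no character theory at all.

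Two small remarks on the verification step. First, Table~\ref{tb:ReeNormConCl} lists the $P$-classes $T,T^{-1}$ (and $Y,YT,YT^{-1}$) only as a union, so reading off that the third coordinate is free \emph{within each individual class} is not quite immediate from the table; it needs the one-line conjugation $x(t',0,0)\,x(0,u_0,v_0)\,x(t',0,0)^{-1}=x(0,u_0,v_0+t'u_0)$ (respectively $x(0,u',0)\,x(t_0,u_0,v_0)\,x(0,u',0)^{-1}=x(t_0,u_0,v_0-t_0u')$), which with $u_0\neq0$ (resp.\ $t_0\neq0$) gives all $v$-shifts. Second, for the classes $J$, $JT$, $JT^{-1}$ and $Ph(w_j)$ the freeness of the third coordinate is indeed visible directly in the parametrisations $\alpha,\beta$ and $Ph(w_j)$. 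With these checks in hand your argument is complete.

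In short: the paper's approach yields the full multiplication table of $Z(kN)$ and exercises the character table it has just built, while your approach is much shorter and shows that Theorem~\ref{th:Snowman} is really a statement about the group structure of $N_G(P)$ rather than its representation theory.
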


\noindent
By Table \ref{tb:ReeNormConCl},  all non-trivial conjugacy classes of $N_G(P)$ have class size divisible by $3$ except $\C(X)$ which has size $|\C(X)|=q-1$.
Therefore a basis for $J(Z(kN_G(P)))$ is given by
\[ 
\mathfrak{B}_{N_G(P)}=\{ \Chat(x)\;|\; x\in \mathscr{P}, x\neq 1_{N_G(P)},x\not\in \C(X)\} \cup \{\Chat(X)+1\}.
\]
The proof of Theorem \ref{th:Snowman} will be spread over several lemmas. 
Let $cc(N_G(P))$ denote the set of conjugacy classes inside $N_G(P)$. All conjugacy class sums are multiplied as elements in $kN_G(P)$, and all equivalences are taken modulo $J(\mathcal{O})N_G(P)$.
We will leave the multiplications for the element $\Chat(X)+1$ till we have computed the other products.

Firstly we deal with the case where $x\in Ph(w_j)$ not corresponding to $J,JT$ or $JT^{-1}$.
\begin{lemma}\label{Phwj}
Let $\C(x)\in \{Ph(w_j)\mid w_j\ne\pm 1\}$ and $\C(y)\in cc(N_G(P))\setminus \{\Chat(1_N), \Chat(X)\}$. 
Then $\Chat(x)\cdot \Chat(y)=0$.
\end{lemma}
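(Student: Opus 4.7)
The plan is to apply Burnside's formula~\eqref{eq:StructureConstants} and reduce modulo $J(\mathcal{O})$, exploiting the fact that most irreducible characters of $N:=N_G(P)$ vanish on $\C(Ph(w_j))$ for $w_j\ne\pm 1$. Concretely, I would first inspect the column of Table~\ref{tb:CharacterTabReeNorm} labelled $Ph(w_j)$: every entry for a non-linear character ($\lambda$, $\mu_1$, $\mu_2$, $\overline{\mu_1}$, $\overline{\mu_2}$, $\chi$, $\overline{\chi}$, $\psi$) is $0$. Hence in Burnside's sum only the $q-1$ linear characters $\alpha_0,\ldots,\alpha_{q-2}$ contribute, and each has degree $1$.

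Since the $\alpha_i$ are inflated from the irreducible characters of the cyclic quotient $N/P\cong C_{q-1}$, they are group homomorphisms on that quotient. Consequently
\[
\sum_{i=0}^{q-2}\alpha_i(x)\alpha_i(y)\alpha_i(z^{-1})=\sum_{i=0}^{q-2}\alpha_i(xyz^{-1}),
\]
and by the first orthogonality relation for characters of $C_{q-1}$ this sum equals $q-1$ when $xyz^{-1}\in P$ and $0$ otherwise. Substituting $|C_N(x)|=q-1$ (from Table~\ref{tb:ReeNormConCl}) into Burnside's formula then yields
\[
a(x,y,z)=\begin{cases}\dfrac{q^3(q-1)}{|C_N(y)|} & \text{if } xyz^{-1}\in P,\\[4pt] 0 & \text{otherwise.}\end{cases}
\]

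It remains to check that $\tfrac{q^3(q-1)}{|C_N(y)|}$ is divisible by $3$ for every admissible class, i.e.\ every class with $y\notin \C(1_N)\cup\C(X)$. Reading the centraliser orders from Table~\ref{tb:ReeNormConCl}, the possibilities for $|C_N(y)|$ are $2q^2$, $3q$, $q-1$, $q(q-1)$, $2q$, yielding values $\tfrac{q(q-1)}{2}$, $\tfrac{q^2(q-1)}{3}$, $q^3$, $q^2$, $\tfrac{q^2(q-1)}{2}$ respectively. Since $q=3^{2k+1}$ and $\gcd(q-1,3)=1$, each of these is a multiple of $3$. Thus $a(x,y,z)\equiv 0$ modulo $J(\mathcal{O})N$ for every $z$, and $\Chat(x)\Chat(y)=0$ in $kN$. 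The only genuine step is the observation that all non-linear characters vanish on $\C(Ph(w_j))$; once this is noted the remainder is routine character-theoretic bookkeeping, so no serious obstacle is expected.
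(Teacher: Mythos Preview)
Your proof is correct and follows essentially the same route as the paper's: both observe from Table~\ref{tb:CharacterTabReeNorm} that only the linear characters $\alpha_i$ survive in Burnside's formula on $\C(Ph(w_j))$, use multiplicativity of these characters together with orthogonality in $N/P\cong C_{q-1}$ to collapse the character sum, and then note that the prefactor $|N|/(|C_N(x)||C_N(y)|)=q^3/|C_N(y)|$ is divisible by $3$ whenever $y\notin\C(1_N)\cup\C(X)$. Your version spells out the case check on $|C_N(y)|$ explicitly, whereas the paper packages the same observation as ``$|C_N(y)|_3<q^3$ so the prefactor is $3^a\cdot(\text{unit})$ with $a\ge 1$''; the content is identical.
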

\begin{proof}
By Table \ref{tb:ReeNormConCl}, $|C_G(x)|= q-1$ and $|C_G(y)|_3 < q^3$. 
As the only characters which do not vanish on $Ph(w_j)$ are the linear characters $\alpha_i$ for $0\leq i\leq q-2$, it follows that for all $z\in N_G(P)$
\[
a(x,y,z)= 3^a\cdot \frac{q-1}{s} \left( \sum\limits_{i=0}^{q-2} \alpha_i(x)\alpha_i(y)\alpha_i(z^{-1})\right)= 3^a\cdot \frac{q-1}{s} \left( \sum\limits_{i=0}^{q-2} \alpha_i(x)\alpha_i(yz^{-1})\right)
\]
\[
= 3^a\cdot \frac{q-1}{s}(\delta_{x,(zy^{-1})}\cdot |C_{N_G(P)}(x)|),
\]
for $a\geq1$, ${\rm gcd}(3,s)=1$ and $\delta_{x,y}$ is defined to be equal to $1$ if $x=y$ and $0$ otherwise.
The second equality follows from the fact that degree one characters are representations of the group and the third from the column orthgonality.
Thus is follows that $a(x,y,z)\equiv 0$ modulo $J(\mathcal{O})N_G(P)$.
\end{proof}

As $a(x,y,z)=a(y,x,z)$ it shall be assumed from now on that neither $x$ nor $y$ is of the form $Ph(w_j)$, where $Ph(w_j)$ is not one of $J,JT,JT^{-1}$.
Next consider the conjugacy classes of $J,JT$ and $JT^{-1}$.

\begin{lemma}\label{JJTJT-1}
Let $x\in \C(J), \C(JT)$ or $\C(JT^{-1})$, and $\C(y)\in cc(N_G(P))\setminus \{\C(1_N), \C(X),\C(Ph(w_j)) \}$. Then $\Chat(x)\cdot \Chat(y)=0$.
\end{lemma}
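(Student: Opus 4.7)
The plan is to apply Burnside's formula and then analyse the resulting character sum via $3$-adic valuations, reducing the problem to an explicit residual calculation in the tightest case. Inspection of Table~\ref{tb:CharacterTabReeNorm} shows that for $x\in\{\C(J),\C(JT),\C(JT^{-1})\}$ the only irreducible characters of $N_G(P)$ that do not vanish on $x$ are the linear characters $\alpha_0,\ldots,\alpha_{q-2}$ together with $\mu_1,\mu_2,\overline{\mu_1},\overline{\mu_2}$, so the sum in Burnside's formula collapses to these. The key structural observation is that $\mu_1+\mu_2$ and $\overline{\mu_1}+\overline{\mu_2}$ are induced from $P$, hence vanish off $P$; therefore $\mu_2(g)=-\mu_1(g)$ and $\overline{\mu_2}(g)=-\overline{\mu_1}(g)$ for $g\in\{J,JT,JT^{-1}\}$, while $\mu_1=\mu_2$ and $\overline{\mu_1}=\overline{\mu_2}$ on $P$.

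With $v_3(|N_G(P)|)=6k+3$ and $v_3(|C_N(x)|)=2k+1$, the prefactor $|N_G(P)|/(|C_N(x)||C_N(y)|)$ has $3$-part equal to $2k$ when $y\in\{\C(Y),\C(YT),\C(YT^{-1})\}$ and equal to $2k+1$ when $y\in\{\C(J),\C(JT),\C(JT^{-1})\}$. Using $v_3(\mu_i(1))=k$ together with the values in Table~\ref{tb:CharacterTabReeNorm} (where $v_3(\mu_i(y))=k$ in the first family and $0$ in the second), every individual Burnside term has $3$-adic valuation at least $0$ or $-k$ respectively. Combined with the prefactor this gives $v_3(a(x,y,z))\geq 2k\geq 2$ in the first family and $v_3(a(x,y,z))\geq k+1\geq 2$ in the second (using $k\geq 1$), so $a(x,y,z)\equiv 0\pmod{J(\mathcal{O})N_G(P)}$ in both families.

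The remaining case $y\in\{\C(T),\C(T^{-1})\}$ is the main obstacle, because the prefactor now has trivial $3$-part. I would split on $z$. If $z\notin\{\C(J),\C(JT),\C(JT^{-1})\}$, the character sum vanishes identically: the linear contribution $\sum_{i=0}^{q-2}(-1)^i\alpha_i(z^{-1})$ is killed by column orthogonality on the quotient $N_G(P)/P\cong C_{q-1}$ (the image of $z$ there is not $h^{(q-1)/2}$), while the $\mu$- and $\overline{\mu}$-contributions cancel in pairs using $\mu_2(x)=-\mu_1(x)$ together with either $\mu_1(z^{-1})=\mu_2(z^{-1})$ (when $z\in P$) or $\mu_i(z^{-1})=\overline{\mu_i}(z^{-1})=0$ (when $z$ lies in a class $\C(Ph(w_j))$ with $w_j\neq\pm 1$). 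Hence $a(x,y,z)=0$ in this subcase.

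For the remaining triples with $x,z\in\{\C(J),\C(JT),\C(JT^{-1})\}$ and $y\in\{\C(T),\C(T^{-1})\}$, the collapsed Burnside formula simplifies to
\[
a(x,y,z)=\frac{q^3(q-1)}{2q^2\,|C_N(x)|}\left((q-1)+\frac{4\left[\mu_1(x)\mu_1(y)\mu_1(z^{-1})+\overline{\mu_1}(x)\overline{\mu_1}(y)\overline{\mu_1}(z^{-1})\right]}{3^k(q-1)}\right).
\]
Substituting $\mu_1(J)=(q-1)/2$, $\mu_1(JT^{\pm 1})=(-1\mp 3^k\sqrt{-3})/2$, $\mu_1(T^{\pm 1})=(-3^k\pm 3^{2k}\sqrt{-3})/2$ and their complex conjugates, a direct case analysis of the nine $(x,z)$ pairs (for each of the two choices of $y$) shows that $a(x,y,z)$ is always a multiple of $q=3^{2k+1}$; this routine but delicate bookkeeping is the main labour of the proof, though Lemma~\ref{lm:inverseclass} halves the number of cases. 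In particular $v_3(a(x,y,z))\geq 2k+1\geq 3$, so $a(x,y,z)\equiv 0$ modulo $J(\mathcal{O})N_G(P)$, which completes the lemma.
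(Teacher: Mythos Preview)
Your proof is correct and follows essentially the same route as the paper: reduce Burnside's sum to the characters $\alpha_i$ and $\mu_i,\overline{\mu_i}$ via the vanishing of the induced characters on $x$, dispose of the easy families by a $3$-adic divisibility count on the prefactor, and treat the critical case $y\in\{\C(T),\C(T^{-1})\}$ by splitting on $z$ and performing the explicit nine-case computation for $z\in\{\C(J),\C(JT),\C(JT^{-1})\}$. Your handling of the subcase $z\notin\{\C(J),\C(JT),\C(JT^{-1})\}$ via the cancellation $\mu_2(x)=-\mu_1(x)$ combined with $\mu_1=\mu_2$ on $P$ is slightly cleaner than the paper's split into a defect inequality (Corollary~\ref{th:defectConClass}) for $z\in P$ and a separate linear-character argument for $z\in\C(Ph(w_j))$, but the substance is the same.
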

\begin{proof}

First we observe that 
\[
\sum\limits_{i=0}^{q-2}\alpha_i(x)\alpha_i(y)\alpha_i(z^{-1})=\sum\limits_{i=0}^{q-2}(-1)^i\alpha_i(yz^{-1})=\left\{
                   \begin{array}{ll}
                     q-1 & yz^{-1}\in \C(J),\C(JT)\text{ or }\C(JT^{-1})\\
                     0 & otherwise\\
                    \end{array}
                   \right.;
\]
the final equality follows by taking row orthogonality in the character table of $N_G(P)/P$.

For $x$ as in the statement of the Lemma, we see that
\[
\begin{array}{rl}
a(x,y,z)=& \frac{q^3(q-1)}{|C_{N_G(P)}(x)||C_{N_G(P)}(y)|}
\left(
\sum\limits_{\theta\in\{\mu_i,\overline{\mu_i}\}}\frac{2\theta(x)\theta(y)\theta(z)}{3^k(q-1)} +\sum\limits_{i=0}^{q-2}(-1)^i\alpha_i(yz^{-1})
\right)\\
=& \frac{q^3(q-1)}{3^k(q-1)|C_{N_G(P)}(x)||C_{N_G(P)}(y)|}
\left(
\sum\limits_{\theta\in\{\mu_i,\overline{\mu_i}\}}2\theta(x)\theta(y)\theta(z) +3^k(q-1)\sum\limits_{i=0}^{q-2}(-1)^i\alpha_i(yz^{-1})
\right).\\
\end{array}
\]

As $\mu_i(g)=\frac{a+b\sqrt{-3}}{2}$ for $a,b\in \mathbb{Z}$ and the summands arising from the $\alpha_i$ add up to an element in $\mathbb{Z}$, it is enough to consider when the front coefficient, given by
\[
\frac{q^3(q-1)}{3^k(q-1)|C_{N_G(P)}(x)||C_{N_G(P)}(y)|},
\]
is divisible by $3$. 
However, as $|C_{N_G(P)}(x)|=qs$, where ${\rm gcd}(3,s)=1$, this coefficient reduces to 
\[
\frac{q^2}{3^ks|C_{N_G(P)}(y)|}.
\]
In particular, as $y\not\in \C(X)$, it follows that this coefficient is divisible by $3$ provided $y$ is not in $\C(T)$ or $\C(T^{-1})$.
Thus is remains to consider the cases $y\in \C(T)$ and $\C(T^{-1})$.

For $z\in \C(1_{N_G(P)}),C(X),\C(T),\C(T^{-1}), \C(Y),\C(YT)$ or $\C(YT^{-1})$, we have that $d_x<d_z$ and thus Theorem~\ref{th:defectConClass} implies $a(x,y,z)\equiv 0$.
If $z\in Ph(w_j)$ for $w_j\ne \pm1$, then 
\[
a(x,y,z)= 3^a\cdot \frac{q-1}{s} \left( \sum\limits_{i=0}^{q-2} \alpha_i(x)\alpha_i(yz^{-1})\right),
\]
where $a\geq 0$.
This sum is non-zero only if $yz^{-1}\in \C(J),\C(JT),\C(JT^{-1})$, which implies that $yz^{-1}$ lies in $Ph(-1)$.
As $y\in P$, by taking the image inside $N_G(P)/P=W$, it follows that $z$ must also lie in $Ph(-1)$, which is a contradiction.
Thus $a(x,y,z)=0$.

As $a(x,T,z)=a(x^{-1},T^{-1},z^{-1})$ and $\C(J)^{-1}=\C(J)$, $\C(JT)^{-1}=\C(JT^{-1})$, it is enough to consider $y\in \C(T)$ and $z\in \C(J),\C(JT)$ or $\C(JT^{-1})$.

In this case it follows that 
\[
\sum\limits_{i=0}^{q-2}\alpha_i(x)\alpha_i(T)\alpha_i(z)=\sum\limits_{i=0}^{q-2}\alpha_i(T)=q-1.
\]
Therefore, for $a=\frac{-3^k+3^{2k}\sqrt{-3}}{2}$ and $b=\frac{-1-3^k\sqrt{-3}}{2}$,
\[
\sum\limits_{\theta\in\{\mu_i,\overline{\mu_i}\}}2\theta(x)\theta(y)\theta(z) +3^k(q-1)^2=
\left\{
\begin{array}{ll}
4(\frac{q-1}{2})^2(a+\overline{a})+3^k(q-1)^2 & x,z\in\C(J)\\
4(\frac{q-1}{2})(a\overline{b}+\overline{a}b)+3^k(q-1)^2 & x\in \C(J),z\in \C(JT)\\
4(\frac{q-1}{2})(ab+\overline{a}\overline{b})+3^k(q-1)^2 & x\in \C(J),z\in \C(JT^{-1})\\
4(ba\overline{b}+\overline{b}\overline{a}b)+3^k(q-1)^2 & x\in \C(JT),z\in \C(JT)\\
4(bab+\overline{b}\overline{a}\overline{b})+3^k(q-1)^2 & x\in \C(JT),z\in \C(JT^{-1})\\
4(\overline{b}a\overline{b}+b\overline{a}b)+3^k(q-1)^2 & x\in \C(JT^{-1}),z\in \C(JT)\\
\end{array}
\right.
\]
Hence
\[
a(x,T,z)=
\left\{
\begin{array}{ll}
0 & x,z\in\C(J)\\
0 & x\in \C(J),z\in \C(JT)\\
q & x\in \C(J),z\in \C(JT^{-1})\\
\frac{q(q-3)}{4} & x\in \C(JT),z\in \C(JT)\\
\frac{q(q-3)}{4} & x\in \C(JT),z\in \C(JT^{-1})\\
\frac{q(q+1)}{4} & x\in \C(JT^{-1}),z\in \C(JT)\\
\end{array}
\right.
\]

By recalling that $a(x,y,z)= a(z^{-1},y,x^{-1})\frac{C_H(z)}{C_H(x)}$, we note that $a(JT,T,J)=\frac{2}{q-1}a(J,T,JT^{-1})$, $a(J,T,JT^{-1})=\frac{2}{q-1}a(J,T,JT)$ and $a(JT,T,JT)=a(JT^{-1},T,JT^{-1})$.
Thus it follows that $a(x,T,z)\equiv 0$ modulo $J(\mathcal{O})N_G(P)$.

This completes the proof.
\end{proof}

As before, we can now assume that neither $x$ nor $y$ lie in one of $Ph(w_j)$, $J$,$JT$ or $JT^{-1}$.

\begin{lemma}
Let $\C(x),\C(y)\in \{\C(Y), \C(YT), \C(YT^{-1})\}$. Then $\Chat(x)\cdot \Chat(y)=0$.
\begin{proof}
As both $\C(x)$ and $\C(y)$ lie in $P$, which is a normal subgroup, then $a(x,y,z)=0$ for $z\in \C(Ph(w_j))$, $\C(J),\C(JT)$ or $\C(JT^{-1})$.
In particular, we may now assume that $\alpha_i(x)=\alpha_i(y)=\alpha_i(z)=1$.

By Table \ref{tb:ReeNormConCl}, $|C_G(x)|= |C_G(y)|= 3q$.
Hence
\[
a(x,y,z)=\frac{q^3(q-1)}{3^2q^2 }\left(
\sum\limits_{\theta\in \mathfrak{A}} \frac{\theta(x)\theta(y)\theta(z^{-1})}{\theta(1)} + q-1+\frac{\lambda(z^{-1})}{q-1}
\right).
\]
where $\mathfrak{A}=\{\mu_1, \mu_2, \overline{\mu_1}, \overline{\mu_2}, \chi, \overline{\chi}\}$. 
Note that for $\theta\in \mathfrak{A}$, we have $|\theta(1)|_3=3^k$; however at the same time $3^k=|\theta(x)|_3=|\theta(y)|_3$. 
Hence  $a(x,y,z)\equiv 0$ for all $z\in N_G(P)$.
\end{proof}
\end{lemma}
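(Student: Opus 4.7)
The plan is to apply Burnside's formula for $a(x,y,z)$ and verify that every non-vanishing summand is divisible by $3$ in $\mathbb{Z}[\omega]_{(3)}$. Since $\C(x)$ and $\C(y)$ both lie in the normal subgroup $P\trianglelefteq N_G(P)$, the product $\Chat(x)\Chat(y)$ is supported on conjugacy classes contained in $P$. In particular, $a(x,y,z)=0$ whenever $z\in\C(Ph(w_j)),\C(J),\C(JT)$ or $\C(JT^{-1})$, so it suffices to prove $a(x,y,z)\equiv 0\pmod 3$ for $z$ in one of the seven $N_G(P)$-classes contained in $P$, namely $\{1_N, X, T, T^{-1}, Y, YT, YT^{-1}\}$.

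For such $z$, since $|C_{N_G(P)}(x)|=|C_{N_G(P)}(y)|=3q$, Burnside's formula reads
\[
a(x,y,z)=\frac{q(q-1)}{9}\sum_{\theta\in{\rm Irr}(N_G(P))}\frac{\theta(x)\theta(y)\theta(z^{-1})}{\theta(1)},
\]
and the prefactor $\tfrac{q(q-1)}{9}=3^{2k-1}(q-1)$ already has $3$-adic valuation $2k-1\geq 1$. I would split the remaining character sum by type. The character $\psi$ vanishes on each of $\C(Y),\C(YT),\C(YT^{-1})$, so it contributes nothing. The $q-1$ linear characters $\alpha_i$ satisfy $\alpha_i|_P=1$ and $\alpha_i(1)=1$, so each $\alpha_i$-summand equals $\alpha_i(z^{-1})=1$; likewise $\lambda$ is integer-valued of degree $q-1$ coprime to $3$, so its summand lies in $\mathbb{Z}_{(3)}$. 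For the remaining six characters $\mathfrak{A}:=\{\mu_1,\mu_2,\overline{\mu_1},\overline{\mu_2},\chi,\overline{\chi}\}$, Table~\ref{tb:CharacterTabReeNorm} shows that $\theta(1)\in\{3^k(q-1)/2,\,3^k(q-1)\}$ has $3$-part exactly $3^k$, while each of $\theta(x),\theta(y)$ is $3^k$ times a unit in $\mathbb{Z}[\omega]_{(3)}$. Hence every $\mathfrak{A}$-summand is $3$-adically integral, and combined with the prefactor its contribution has $3$-adic valuation at least $(2k-1)+k=3k-1\geq 2$.

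Putting the estimates together, $a(x,y,z)\in 3\mathbb{Z}[\omega]_{(3)}\cap\mathbb{Z}=3\mathbb{Z}$, whence $\Chat(x)\Chat(y)=0$ in $kN_G(P)$. The main technical point I anticipate is the bookkeeping of $3$-adic valuations for the algebraic values $\theta(z^{-1})$ on the classes $T^{\pm 1}$ and $Y,YT^{\pm 1}$, where the entries involve $\sqrt{-3}$: one must verify that the summand, a priori a non-real element of $\mathbb{Q}(\sqrt{-3})$, genuinely lies in $\mathbb{Z}[\omega]_{(3)}$ with the claimed valuation. This is transparent from the explicit table (the relevant values all factor as $3^k\cdot u$ with $u$ a unit), so no subtle cancellation between conjugate pairs $\{\mu_i,\overline{\mu_i}\}$ or $\{\chi,\overline{\chi}\}$ is required.
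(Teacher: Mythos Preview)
Your proof is correct and follows essentially the same approach as the paper: both reduce to $z\in P$ via normality, apply Burnside's formula with the prefactor $\tfrac{q(q-1)}{9}=3^{2k-1}(q-1)$, and then check that the remaining character sum is $3$-adically integral by splitting into the $\alpha_i$, $\lambda$, $\psi$, and $\mathfrak{A}$ contributions. Your write-up is slightly more explicit about working in $\mathbb{Z}[\omega]_{(3)}$ and about the valuation of $\theta(z^{-1})$, but the argument is the same.
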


\begin{lemma}
Let $\C(x)\in \{\C(T),\C(T^{-1})\}$ and $\C(y)\in \{\C(Y),\C( YT), \C(YT^{-1})\}$. Then $\Chat(x)\cdot \Chat(y)=0$.
\end{lemma}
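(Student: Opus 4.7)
The plan is to show $a(x,y,z) \equiv 0 \pmod 3$ for every conjugacy class representative $z$, in three stages. Since both $\C(x)$ and $\C(y)$ are contained in the normal Sylow subgroup $P$, their product lies in $P$, so $a(x,y,z) = 0$ whenever $\C(z) \not\subseteq P$. Among the classes contained in $P$, namely $\{1_N, X, T, T^{-1}, Y, YT, YT^{-1}\}$, the first four satisfy $d_z \in \{6k+3, 4k+2\}$, which exceeds $\min(d_x, d_y) = 2k+2$ for $k \geq 1$ (see Table~\ref{tb:ReeNormConCl}); Corollary~\ref{th:defectConClass} then gives $3 \mid a(x,y,z)$ in these cases.

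It remains to treat $z \in \{Y, YT, YT^{-1}\}$ via direct Burnside computation. Because $x, y, z$ all lie in $P$, every linear $\alpha_i$ restricts to $1$ and contributes $q-1$ in total, the character $\psi$ vanishes on all five classes $\C(T), \C(T^{-1}), \C(Y), \C(YT), \C(YT^{-1})$, and $\lambda$ contributes the constant $1$. The six remaining characters collapse thanks to three identities visible in Table~\ref{tb:CharacterTabReeNorm}: $\mu_1 = \mu_2$ and $\overline{\mu_1} = \overline{\mu_2}$ on all five classes, $\chi = 2\mu_1$ on $\C(T)$ and $\C(T^{-1})$, and $\chi = -\mu_1$ on $\C(Y), \C(YT), \C(YT^{-1})$. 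After the prefactor $|N|/(|C_N(x)||C_N(y)|) = (q-1)/6$ is absorbed, Burnside's formula reduces to
\[
a(x,y,z) \;=\; \frac{q(q-1)}{6} \;+\; \frac{1}{3^k}\Bigl(\mu_1(x)\mu_1(y)\mu_1(z^{-1}) + \overline{\mu_1}(x)\overline{\mu_1}(y)\overline{\mu_1}(z^{-1})\Bigr).
\]

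Finally, write $\mu_1(T) = 3^k A$ with $A = (-1 + 3^k\sqrt{-3})/2$ and $\mu_1(g) = -\varepsilon 3^k \zeta_g$ for $g \in \{Y, YT, YT^{-1}\}$ with $\zeta_g \in \{1, \omega, \overline{\omega}\}$; the bracket then equals $3^{3k}(A\zeta + \overline{A}\,\overline{\zeta})$ where $\zeta = \zeta_y \zeta_{z^{-1}}$, so that $a(T, y, z) = 3^{2k}\bigl[(q-1)/2 + 2\operatorname{Re}(A\zeta)\bigr]$. A short case analysis on the three possibilities for $\zeta$ confirms the bracket is an integer (for instance $(q-3)/2 = 3(3^{2k}-1)/2$ when $\zeta = 1$), whereupon the factor $3^{2k}$ already forces divisibility by $3$ for $k \geq 1$. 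The case $x \in \C(T^{-1})$ is immediate from Lemma~\ref{lm:inverseclass} via $a(T^{-1}, y, z) = a(T, y^{-1}, z^{-1})$, since the set $\{\C(Y), \C(YT), \C(YT^{-1})\}$ is closed under inversion. The main obstacle will be the middle stage: the six $\mu_i, \chi$ contributions look unwieldy until the three identities above are spotted, after which the collapse to the compact formula is forced and the final arithmetic verification is routine.
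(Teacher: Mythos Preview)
Your proof is correct and follows essentially the same three-stage strategy as the paper: eliminate $z\notin P$ via normality, eliminate $z\in\{1_N,X,T,T^{-1}\}$ via the defect inequality, and handle $z\in\{Y,YT,YT^{-1}\}$ by Burnside. The paper is terser in the third stage, simply observing that each $\theta\in\{\mu_1,\mu_2,\overline{\mu_1},\overline{\mu_2},\chi,\overline{\chi}\}$ satisfies $|\theta(x)|_3=|\theta(y)|_3=|\theta(z^{-1})|_3=3^k=|\theta(1)|_3$, so that the $\mathfrak{A}$-sum contributes a factor $3^{2k}$ and the whole expression becomes $\tfrac{q-1}{2}\bigl(\tfrac{q}{3}+3^{2k-1}\cdot\tfrac{s_1}{s_2}\bigr)$; your explicit collapse of the six characters to $\mu_1,\overline{\mu_1}$ and the case analysis on $\zeta$ makes the same point more transparently and confirms the bracket is genuinely an integer.
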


\begin{proof}
As $x,y\in P$, which is a normal subgroup, then $a(x,y,z)=0$ for $z\in \C(Ph(w_j)), \C(J), \C(JT)$ or $\C(JT^{-1})$.
If $z\in \C(1_N), \C(X), \C(T)$ or $\C(T^{-1})$, then $d_y<d_z$ and so by Theorem \ref{th:defectConClass}, $a(x,y,z)\equiv 0$  mod $J(\mathcal{O})N_G(P)$.

Thus assume $z\in \C(Y),\C(YT$) or $\C(YT^{-1})$.
Then
\[
a(x,y,z)=\frac{(q-1)}{2\cdot3}\left((q-1)+1+\sum\limits_{\theta\in \mathfrak{A}} \frac{\theta(x)\theta(y)\theta(z^{-1})}{\theta(1)}\right),
\]
where $\mathfrak{A}=\{\mu_1, \mu_2, \overline{\mu_1}, \overline{\mu_2}, \chi, \overline{\chi}\}$. 
The multiplication $\theta(x)\theta(y)\theta(z^{-1})$ is of the form $3^{3k}\cdot s$, where ${\rm gcd}(3,s)=1$.  
Hence, as $k\geq 1$, we have that modulo $J(\mathcal{O})N_G(P)$
\[
a(x,y,z)= \frac{q-1}{2}\left( \frac{q}{3}+3^{2k-1}\cdot \frac{s_1}{s_2}\right)\equiv 0,
\]
as ${\rm gcd}(3,s_i)=1$.
\end{proof}

\begin{lemma}
Let $\C(x),\C(y)\in \{\C(T),\C(T^{-1})\}$. Then $\Chat(x)\cdot \Chat(y)=0$.
\end{lemma}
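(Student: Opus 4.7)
The plan is to verify $a(x,y,z) \equiv 0$ modulo $J(\mathcal{O})N_G(P)$ for every $z$ and every choice of $x, y \in \C(T) \cup \C(T^{-1})$. Since $\C(T)$ and $\C(T^{-1})$ are contained in $P$ and $P \trianglelefteq N_G(P)$, the product $\Chat(x)\Chat(y)$ lies in the subalgebra spanned by class sums of elements of $P$, so $a(x,y,z)=0$ outright whenever $z \notin P$; this handles $z$ in any of $\C(Ph(w_j))$, $\C(J)$, $\C(JT)$, $\C(JT^{-1})$. Among the classes contained in $P$, both $\C(1_N)$ and $\C(X)$ have $3$-defect $3(2k+1)$, strictly larger than $d_T = 2(2k+1)$, so Corollary~\ref{th:defectConClass} forces $a(x,y,z) \equiv 0$ for these two. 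The only remaining classes are $\C(T), \C(T^{-1}), \C(Y), \C(YT), \C(YT^{-1})$, which I would treat by direct Burnside computation: since $\psi$ is the only character of $N_G(P)$ vanishing on $\C(T)$, the sum ranges over $\alpha_i$, $\lambda$, $\mu_1, \mu_2, \overline{\mu_1}, \overline{\mu_2}, \chi, \overline{\chi}$.

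For $z \in \{\C(Y), \C(YT), \C(YT^{-1})\}$ I expect $a(x,y,z)=0$ exactly, via two independent cancellations. The $q-1$ linear characters $\alpha_i$ each contribute $1$, while $\lambda$ contributes $\lambda(T)\lambda(T)\lambda(z^{-1})/\lambda(1) = (q-1)(-1) = -(q-1)$ (as $\lambda$ takes the value $q-1$ on $\C(T)\cup\C(T^{-1})$ and $-1$ on $\C(Y)\cup\C(YT)\cup\C(YT^{-1})$), so their sum is zero. For the remaining six characters the key observation is that $\mu_1(g)=\mu_2(g)$ on $P$ and $\chi(g)=-\mu_1(g)$ for $g \in \{Y,YT,YT^{-1}\}$, whereas $\chi(T)=2\mu_1(T)$ and $\chi(1)=2\mu_1(1)$. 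Combining the factor-of-two ratio in $\theta(T)\theta(T^{\pm 1})/\theta(1)$ with the opposite signs appearing in $\theta(z^{-1})$, one obtains an exact pairwise cancellation between the $\chi$ contribution and the combined $\mu_1+\mu_2$ contribution, and similarly for the complex conjugates $\overline{\chi}$ and $\overline{\mu_1}+\overline{\mu_2}$.

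For $z \in \{\C(T), \C(T^{-1})\}$ the cancellation is incomplete and the direct Burnside computation should give integer values of the form $q(q-3)/4$ or $q(q+1)/4$, both divisible by $3$ since $q = 3^{2k+1}$; the four choices of $(x,y)\in(\C(T) \cup \C(T^{-1}))^2$ reduce via Lemma~\ref{lm:inverseclass} to at most two essentially distinct computations. The main obstacle will be verifying the six-character cancellation for the order-$9$ classes: one must track the complex values $\mu_1(T) = (-3^k + 3^{2k}\sqrt{-3})/2$, the cube roots of unity $\omega = e^{2\pi i/3}$ and $\overline{\omega}$, and the sign $\varepsilon \in \{\pm 1\}$, and confirm the cancellation in all three subcases.
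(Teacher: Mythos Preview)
Your argument is correct, but it takes a different route from the paper for the cases $z\in\{\C(T),\C(T^{-1}),\C(Y),\C(YT),\C(YT^{-1})\}$. The paper disposes of all five of these at once by invoking Proposition~\ref{prop:a(xyz)generic}: since $^2G_2(q)$ has trivial intersection Sylow $3$-subgroups and $x,y,z\in Q\setminus\{1\}$, one has $a_{N_G(P)}(x,y,z)\equiv a_G(x,y,z)\bmod |C_G(z)|_3$, and the latter was already shown to vanish modulo $3$ in Section~\ref{Sec:MainThRee} (where $\Chat(T)\Chat(T^{\pm1})=\beta$ involves only classes in $\mathcal{S}$). This reuses the Section~3 work and avoids any further Burnside computation.

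Your approach instead computes directly in $N_G(P)$. The exact cancellation you exhibit for $z\in\{\C(Y),\C(YT),\C(YT^{-1})\}$ is genuine and rather neat: the identities $\chi(T^{\pm1})=2\mu_1(T^{\pm1})$, $\chi(1)=2\mu_1(1)$, $\chi|_{\{Y,YT,YT^{-1}\}}=-\mu_1|_{\{Y,YT,YT^{-1}\}}$ and $\mu_1|_P=\mu_2|_P$ do force the six-character block to vanish identically, and the $\alpha_i$/$\lambda$ cancellation is immediate. For $z\in\{\C(T),\C(T^{-1})\}$ your predicted values $q(q-3)/4$ and $q(q+1)/4$ are exactly right (and indeed only two computations are needed, since Lemma~\ref{lm:inverseclass} gives $a(T,T^{-1},T)=a(T,T,T)$). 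What you gain is a self-contained proof that does not rely on the trivial-intersection machinery or the Section~3 computations; what the paper gains is brevity and a conceptual link between the two block centres.
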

\begin{proof}
As in the previous lemma, if $z\in \C(Ph(w_j)), \C(J), \C(JT)$ or $\C(JT^{-1})$ then $a(x,y,z)=0$. 
If $z$ in $\C(1_N), \C(X)$  then $d_x<d_z$ and so by Theorem \ref{th:defectConClass}, $a(x,y,z)\equiv 0$.

If $\C(z)\in \{\C(T),\C(T^{-1}), \C(Y),\C(YT), \C(YT^{-1}) \}$ then by Proposition~\ref{prop:a(xyz)generic}, $a_{N_G(P)}(x,y,z)\equiv a_G(x,y,z) \equiv 0\;\;{\rm mod} \;J(\mathcal{O})N$.
\end{proof}

\begin{lemma} Let $\C(y)$ in $cc(N)\setminus \{\C(X),\C(1_N)\}$. Then $(\Chat(X)+1) \cdot \Chat(y)= 0$  and $ (\Chat(X)+1)^2=0$.
\end{lemma}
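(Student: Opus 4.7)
The plan is to derive both identities from Proposition~\ref{th:sumStrucConst} combined with the vanishings $\Chat(x)\Chat(y) \equiv 0$ established in the five preceding lemmas, thereby avoiding any direct application of Burnside's formula involving $\C(X)$.

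For the first statement, I would fix $y \in cc(N_G(P)) \setminus \{\C(1_N), \C(X)\}$. By Table~\ref{tb:ReeNormConCl}, every such class has size divisible by $3$, so Proposition~\ref{th:sumStrucConst} gives
\[
\sum_{x \in \mathscr{P}} a(x, y, z) = |\C(y)| \equiv 0 \pmod{3}.
\]
The five preceding lemmas (covering $x$ of type $Ph(w_j)$; $x \in \{J,JT,JT^{-1}\}$; $x,y \in \{Y,YT,YT^{-1}\}$; $x \in \{T,T^{-1}\}$ with $y \in \{Y,YT,YT^{-1}\}$; and $x,y \in \{T,T^{-1}\}$), combined with the symmetry $a(x,y,z) = a(y,x,z)$, together show $a(x,y,z) \equiv 0 \pmod{3}$ for every pair $x, y \notin \{\C(1_N), \C(X)\}$. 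Since $a(1_N, y, z) = \delta_{y,z}$, we obtain
\[
a(X, y, z) \equiv -\delta_{y,z} \pmod{3},
\]
which is precisely the identity $(\Chat(X) + 1)\Chat(y) \equiv 0$.

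For the second statement, I would expand $(\Chat(X) + 1)^2 = \Chat(X)^2 + 2\Chat(X) + 1$ and compute $\Chat(X)^2$ modulo $3$. Applying Proposition~\ref{th:sumStrucConst} to the fixed class $X$ gives $\sum_x a(x, X, z) = |\C(X)| = q - 1 \equiv -1 \pmod{3}$. By the symmetry of the structure constants and the result of the first part, $a(x, X, z) = a(X, x, z) \equiv -\delta_{x,z} \pmod{3}$ for each $x \notin \{\C(1_N), \C(X)\}$. A case analysis on whether $z = 1_N$, $z = X$, or neither then yields
\[
a(X, X, 1_N) \equiv -1, \quad a(X, X, X) \equiv 1, \quad a(X, X, z) \equiv 0 \text{ otherwise}.
\]
Hence $\Chat(X)^2 \equiv \Chat(X) - 1 \pmod{3}$, and so
\[
(\Chat(X) + 1)^2 \equiv (\Chat(X) - 1) + 2\Chat(X) + 1 = 3\Chat(X) \equiv 0.
\]

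The main obstacle is the bookkeeping for the first step: one must confirm that the five preceding lemmas, used together with the symmetry $a(x,y,z) = a(y,x,z)$, genuinely cover every pair of classes outside $\{\C(1_N), \C(X)\}$. Once this enumeration is in place, the remaining arithmetic modulo $3$ is entirely mechanical and requires no further character-theoretic input.
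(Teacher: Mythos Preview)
Your argument for the first identity matches the paper's: use Proposition~\ref{th:sumStrucConst} together with the vanishing of all other $a(x,y,z)$ to force $a(X,y,z)\equiv 2\delta_{y,z}$, hence $\Chat(X)\Chat(y)=2\Chat(y)$ and $(\Chat(X)+1)\Chat(y)=0$.

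For the second identity your route is correct but different from the paper's. You recycle Proposition~\ref{th:sumStrucConst} with $y=X$ and feed in the first part to pin down $a(X,X,z)$ modulo~$3$, arriving at $\Chat(X)^2\equiv\Chat(X)-1$. The paper instead observes directly that $1+\Chat(X)=\sum_{\gamma\in Z(P)}\gamma$ is the group-algebra sum over the centre $Z(P)$, which is an abelian group of order~$q$; squaring this sum therefore gives $q(1+\Chat(X))\equiv 0$. The paper's argument is a one-line structural observation requiring no structure-constant bookkeeping, whereas yours stays within the machinery already set up and avoids invoking the description of $Z(P)$; both are valid and short.
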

\begin{proof} So far we have already calculated all structure constants apart from $a(X,y,z)$; hence we can use Proposition \ref{th:sumStrucConst} to find the remaining ones.

By Proposition \ref{th:sumStrucConst} and the conjugacy class sizes given in Table \ref{tb:ReeNormConCl}, we have $\sum_x a(x,y,z)=|\C(y)|\equiv 0 \; {\rm modulo}\;3$. Hence modulo $J(\mathcal{O})N_G(P)$.
\[
a(1,y,z)+\sum_{x\not\in \{1_N,\C(X)\}}a(x,y,z)+ a(X,y,z)\equiv 0 \;\; \forall z\in N_G(P).
\]
Now $a(x\neq X,y,z)\equiv 0$ and $a(1,y,z)\equiv0$ except $a(1,y,y)=1$. 
Therefore
\[a(X,y,z) = \left\{
                \begin{array}{ll}
                  0, & \hbox{if $\C(y)\neq \C(z)$;} \\
                  2, & \hbox{ if $\C(y)=  \C(z)$.}
                \end{array}
              \right.
\]
Hence $\Chat(X) \cdot \Chat(y)=2\cdot \Chat(y)$ and so $(\Chat(X)+1) \cdot \Chat(y)\equiv 0$.

Moreover $1+\Chat(X) = \sum_{\gamma\in Z(P)}\gamma$, and therefore $(1+\Chat(X))^2=q(1+\Chat(X)) \equiv 0$.
\end{proof}

This concludes the proof of Theorem \ref{th:Snowman}.
Moreover, by combining Theorem~\ref{th:Snowman} and Theorem~\ref{th:MainReeLL3} we have proven the main result of this paper, Theorem~\ref{th:FullStatmentRee}.

\section*{Acknowledgments}

The work formed part of the second author's PhD research, which was supported by EPSRC grant $1240275$.
The research of the first author is supported by the LMS Postdoctoral Mobility Grant 15-16 08. 

\bibliographystyle{plain}
\bibliography{bibfile}

\newpage

\begin{landscape}

\begin{table}[c]
\begin{center}
\Large
{\bf Appendix}
\end{center}

\normalsize
\caption{Character table of $^2G_2(q)$ \cite{Ward}}\label{tb:CharacterTabReeGp}
\[
\footnotesize
\begin{array}{c||cccccccccccccccc}
 & 1 & R^a\ne 1 & S^a\ne 1 & V_i & W_i & X & Y & T & T^{-1} & YT & YT^{-1} & JT & JT^{-1} & JR^a\ne J & JS^a\ne J & J\\ \hline\hline
\xi_1 & 1 &1 &1 &1 &1& 1& 1& 1& 1 & 1 & 1 & 1 & 1 & 1 & 1 & 1\\
\xi_2 & q^2-q+1 & 1 & 3 & 0 & 0 & 1-q & 1 & 1 & 1 & 1 & 1 & -1 & -1 & -1 & -1 & -1\\
\xi_3 & q^3 & 1 & -1 & -1 & -1 & 0 & 0 & 0 & 0 & 0 & 0 & 0 & 0 & 1 & -1 & q\\
\xi_4& q(q^2-q+1) & 1 & -3 & 0 & 0 & q & 0 & 0 & 0 & 0 & 0 & 0 & 0 & -1 & 1 & -q\\
\xi_5 & (q-1)m(q+1+3m)/2 & 0 & 1 & -1 & 0 & -(q+m)/2 & m & \alpha & \bar{\alpha} &  \beta & \bar{\beta} & \gamma & \bar{\gamma} & 0 & 1 & -(q-1)/2\\
\xi_6 & (q-1)m(q+1+3m)/2 & 0 & -1 & 0 & 1 & (q+m)/2 & m & \alpha & \bar{\alpha} & \beta & \bar{\beta} & -\gamma & -\bar{\gamma} & 0 & -1 & (q-1)/2\\
\xi_7 & (q-1)m(q+1+3m)/2 & 0 & 1 & -1 & 0 & -(q+m)/2 & m & \bar{\alpha} & \alpha & \bar{\beta} & \beta & \bar{\gamma} & \gamma & 0 & 1 & -(q-1)/2\\
\xi_8 & (q-1)m(q+1+3m)/2 & 0 & -1 & 0 & 1 & (q+m)/2 & m & \bar{\alpha} & \alpha & \bar{\beta} & \beta & -\bar{\gamma} & -\gamma & 0 & -1 & (q-1)/2\\
\xi_9 & m(q^2-1) & 0 & 0 & -1 & 1 & -m & -m & \delta & \bar{\delta} & \epsilon & \bar{\epsilon} & 0 & 0 & 0 & 0 & 0\\
\xi_{10} & m(q^2-1) & 0 & 0 & -1 & 1 & -m & -m & \bar{\delta} & \delta & \bar{\epsilon} & \epsilon & 0 & 0 & 0 & 0 & 0\\
\eta_r & q^3+1 & {\rm I}-6 & 0 & 0 & 0 & 1 & 1 & 1 & 1 & 1 & 1 & 1 & 1 & {\rm I}-6 & 0 & q+1\\
\eta_r' & q^3+1 & {\rm I}-6 & 0 & 0 & 0 & 1 & 1 & 1 & 1 & 1 & 1 & -1 & -1 & {\rm I}-6 & 0 & -(q+1)\\
\eta_t & (q-1)(q^2-q+1) & 0 & {\rm II}-6 & 0 & 0 & 2q-1 & -1 & -1 & -1 & -1 & -1 & -3 & -3 & 0 & {\rm II}-6 & 3(q-1)\\
\eta_t' & (q-1)(q^2-q+1) & 0 & {\rm II}-6 & 0 & 0 & 2q-1 & -1 & -1 & -1 & -1 & -1 & 1 & 1 & 0 & {\rm II}-6 & -(q-1)\\
\eta_l^- & (q^2-1)(q+1+3m) & 0 & 0 & {\rm IV}-5 & 0 & -q-1-3m & -1 & -3m-1 & -3m-1& -1 & -1 & 0 & 0 & 0 & 0 & 0\\
\eta_l^+ & (q^2-1)(q+1+3m) & 0 & 0 & 0 & {\rm IV}-5 & -q-1+3m & -1 & 3m-1 & 3m-1 & -1 & -1 & 0 & 0 & 0 & 0 & 0\\
\end{array}
\]
where $q=3^{2k+1}$, $m=3^k$, $\alpha= (-m+im^2\sqrt{3})/2$, $\beta= (-m-im\sqrt{3})/2$, $\gamma=(1-im\sqrt{3})/2$, $\delta=-m+im^2\sqrt{3}$ and $\epsilon=(m+im\sqrt{3})/2$.

\end{table}

\end{landscape}

\normalsize

\end{document}